\newtheorem{remark}{Remark}
\newtheorem{definition}{Definition}
\newtheorem{theorem}{Theorem}
\newtheorem{proof}{Proof}
\newtheorem{lemma}{Lemma}
\newcommand{\R}{\mathbb{R}}
\newcommand{\PP}{\mathbb{P}}
\newcommand{\pP}{\mathbb{P}}
\newcommand{\bV}{\mathbb{V}}
\newcommand{\cU}{\mathcal{U}}
\newcommand{\EE}{\mathbb{E}}
\renewcommand{\d}{\textup{d}}
\newcommand{\dx}{\textup{d}x}
\newcommand{\ds}{\textup{d}s}
\newcommand{\dt}{\textup{d}t}
\DeclareMathOperator{\tr}{tr}
\newcommand{\footremember}[2]{%
    \footnote{#2}
    \newcounter{#1}
    \setcounter{#1}{\value{footnote}}%
}
\newcommand{\restr}[2]{#1\rvert_{#2}}
\begin{document}

\title{PDE-constrained shape optimization: towards product shape spaces and stochastic models\thanks{This work has been partly supported by the state of Hamburg within the Landesforschungsf\"orderung  under  project ``Simulation-Based Design Optimization of Dynamic Systems Under Uncertainties'' (SENSUS) with project number LFF-GK11, and by the German Academic Exchange Service (DAAD) within the program ``Research Grants-Doctoral Programmes in Germany, 2017/18."}}


\author{Caroline Geiersbach \and Estefania  Loayza-Romero \and  Kathrin Welker}


\author{Caroline Geiersbach\footremember{a}{Weierstrass Institute,
	\texttt{caroline.geiersbach@wias-berlin.de}} \and Estefania Loayza-Romero\footremember{b}{Chemnitz University of Technology, \texttt{estefania.loayza@math.tu-chemnitz.de}} \and Kathrin Welker\footremember{c}{Helmut-Schmidt-University / University of the Federal Armed Forces Hamburg, \texttt{welker@hsu-hh.de}}
}

\date{\today}

\maketitle


\abstract{
Shape optimization models with one or more shapes are considered in this chapter. Of particular interest for applications are problems in which where a so-called shape functional is constrained by a partial differential equation (PDE) describing the underlying physics. A connection can made between a classical view of shape optimization and the differential-geometric structure of shape spaces. To handle problems where a shape functional depends on multiple shapes, a theoretical framework is presented, whereby the optimization variable can be represented as a vector of shapes belonging to a product shape space. The multi-shape gradient and multi-shape derivative are defined, which allows for a rigorous justification of a steepest descent method with Armijo backtracking. As long as the shapes as subsets of a hold-all domain do not intersect, solving a single deformation equation is enough to provide descent directions with respect to each shape. Additionally, a framework for handling uncertainties arising from inputs or parameters in the PDE is presented. To handle potentially high-dimensional stochastic spaces, a stochastic gradient method is proposed. A model problem is constructed, demonstrating how uncertainty can be introduced into the problem and the objective can be transformed by use of the expectation. Finally, numerical experiments in the deterministic and stochastic case are devised, which demonstrate the effectiveness of the presented algorithms. 
}

%
%

\section{Introduction}

Shape optimization is concerned with problems in which an objective function is supposed to be minimized with respect to a shape, or a subset of $\R^{d}$.
One challenge in shape optimization is finding the correct model to describe the set of shapes; another is finding a way to handle the lack of vector structure of the shape space.
In principle, a finite dimensional optimization problem can be obtained for example by representing shapes as splines. However, this representation limits the admissible set of shapes, and the connection of shape calculus with infinite dimensional spaces \cite{Delfour-Zolesio-2001,SokoZol} leads to a more flexible approach. It was suggested to embed shape optimization problems in the framework of optimization on shape spaces \cite{Schulz,Welker2016}. One possible approach is to cast the sets of shapes in a Riemannian viewpoint, where each shape is a point on an abstract manifold equipped with a notion of distances between shapes; see, e.g., \cite{MichorMumford2,MichorMumford1}. From a theoretical and computational point of view, it is attractive to optimize in Riemannian shape manifolds because algorithmic ideas from \cite{Absil} can be combined with approaches from differential geometry. 
Here, the Riemannian shape gradient 
can be used to solve such shape optimization problems using the gradient descent method.
In the past, major effort in shape calculus has been devoted towards expressions for shape derivatives in the so-called Hadamard form, which are integrals over the surface (cf.~\cite{Delfour-Zolesio-2001,SokoZol}). 
During the calculation of these expressions, volume shape derivative terms arise as an intermediate result. 
In general, additional regularity assumptions are necessary in order to transform the volume forms into surface forms.
Besides saving analytical effort, this makes volume expressions preferable to Hadamard forms.
In this chapter,  the Steklov--Poincar\'{e} metric is considered, which allows to use the volume formulations (cf.~\cite{SchulzSiebenbornWelker2015:2}). The reader is referred to \cite{HardestyKouriLindsayRidzalStevensViertel:2020:1,HiptmairPaganiniSargheini:2015:1}  for a comparison on the volume and boundary formulations with respect to their order of convergence in a finite element setting.

In applications, often more than one shape needs to be considered, e.g., in electrical impedance tomography, where the material distribution of electrical properties such as electric conductivity and permittivity inside the body is examined \cite{Cheney1999,Kwon2002,laurain2016distributed}, 
and the optimization of biological cell composites in the human skin \cite{SiebenbornNaegel,Siebenborn2017}.  
If a shape is seen as a point on an abstract manifold, it is natural to view a collection of shapes as a vector of points. Using this perspective, a shape optimization problem can be formulated over multiple shapes. This novel, multi-shape optimization problem is developed in this chapter. 

A second area of focus in this chapter is in the development of stochastic models for multi-shape optimization problems.
There is an increasing effort to incorporate uncertainty into shape optimization models; see, for instance \cite{Dambrine2015, dambrine2019incorporating, Hiptmair2018, Liu2017, Martinez-Frutos2016}. 
Many relevant problems contain additional constraints in the form of a PDE, which describe the physical laws that the shape should obey. Often, material coefficients and external inputs might not be known exactly, but rather be randomly distributed according to a probability distribution obtained empirically. In this case, one might still wish to optimize over a set of these possibilities to obtain a more robust shape. When the number of possible scenarios in the probability space is small, then the optimization problem can be solved over the entire set of scenarios. This approach is not relevant for most applications, as it becomes intractable if the random field has more than a few scenarios. For problems with PDEs containing uncertain inputs or parameters, either the stochastic space is discretized, or sampling methods are used. If the stochastic space is discretized, one typically relies on a finite-dimension assumption, where a truncated expansion is used as an approximation of the infinite-dimensional random field. Numerical methods include stochastic Galerkin method \cite{Babuska2004} and sparse-tensor discretization \cite{Schwab2011}. Sample-based approaches involve taking random or carefully chosen realizations of the input parameters; this includes Monte Carlo or quasi Monte Carlo methods and stochastic collocation \cite{Babuska2007}. In the stochastic approximation approach, dating back to a chapter by Robbins and Monro \cite{Robbins1951}, one uses a stochastic gradient in place of a gradient to iteratively minimize the expected value over a random function. Recently, stochastic approximation was proposed to solve problems formulated over a shape space that contain uncertainties \cite{GeiersbachLoayzaWelker}. A novel stochastic gradient method was formulated over infinite-dimensional shape spaces and convergence of the method was proven. The work was informed by its demonstrated success in the context of PDE-constrained optimization under uncertainty \cite{Geiersbach2019, Haber2012, Martin2018, Geiersbach2020b, Geiersbach2021}. 

The chapter is structured as follows. Section~\ref{sec:OptProductManifolds} is concerned with deterministic shape optimization. First, in subsection~\ref{section:DetOpt}, it is summarized how the theory of deterministic PDE-constrained shape optimization problems can be connected with the differential-geometric structure of the space of smooth shapes. The novel contribution of this chapter is in subsection~\ref{section:product-manifold}, which concentrates on more than one shape to be optimized in the optimization model. 
A framework is introduced to justify a mesh deformation method using a Steklov--Poincar\'e metric defined on a product manifold. This novel framework is further developed in section~\ref{section:stochastic} in the context of shape optimization under uncertainty. The stochastic gradient method is revisited in the context of problems depending on mutiple shapes. Numerical experiments demonstrating the effectiveness of the deterministic and stochastic methods are shown in section~\ref{sec:numerical_experiments}. Finally, closing remarks are shared in section~\ref{sec:conclusion}.

\section{Optimization over product shape manifolds}
\label{sec:OptProductManifolds}

This chapter is concerned with class of optimization problems, where the optimization variable is a vector $u=(u_1, \dots, u_N)$ of non-intersecting shapes  contained in a bounded domain $D \subset \R^{d}$ as shown in figure~\ref{fig_domain} for $d=2$ and $N=5$. This domain will sometimes be called the hold-all domain, and its boundary is denoted by $\partial D$.
The outer normal vector field $\operatorname{n}$ on a shape $u\in\mathcal{U}^N$ is defined by $\operatorname{n}=(\operatorname{n}_1,\dots, \operatorname{n}_N)$, where $\operatorname{n}_i$ denotes the unit outward normal vector field to $u_i$ for $i=1,\dots, N$.

Next, the shape space concept considered in this chapter needs to be clarified. Shapes space definitions have been extensively studied in recent decades. 
Already in 1984, \cite{Kendall} introduced the notion of a shape space. Here, a shape space is just modeled as a linear (vector) space, which in the simplest case is made up of vectors of landmark positions.
However, there is a large number of different shape concepts, e.g., plane curves \cite{MichorMumford}, surfaces in higher dimensions \cite{BauerHarmsMichor,MichorMumford2}, boundary contours of objects \cite{FuchsJuettlerScherzerYang,LingJacobs,RumpfWirth2}, multiphase objects \cite{WirthRumpf}, characteristic functions of measurable sets \cite{Zolesio},  morphologies of images \cite{DroskeRumpf}, and planar triangular meshes \cite{HerzogLoayzaRomero:2020:1}.
In a lot of processes in engineering, medical imaging, and science, there is a great interest to equip the space of all shapes with a significant metric to distinguish between different shape geometries.
In the simplest shape space case (landmark vectors), the distances between shapes can be measured by the Euclidean distance, but in general, the study of shapes and their similarities is a central problem.
In contrast to a parametric optimization problem, which can be obtained, e.g., by representing shapes as splines, the connection of shape calculus with infinite dimensional spaces \cite{Delfour-Zolesio-2001,ItoKunisch,SokoZol} leads to a more flexible approach. 
As already mentioned, solving PDE-constrained shape optimization problems under a differential geometric paradigm has various advantages  \cite{Schulz2015a}, one of them being the opportunity to obtain a natural measure of similarity of shapes through the Riemannian metric. Moreover, depending on the metric defined over a manifold, different goals can be achieved. This chapter focuses on the Steklov--Poincare metric \cite{SchulzSiebenbornWelker2015:2} because of its direct relation to the finite element method.

In view of using the Steklov--Poincar\'{e} metric, this chapter concentrates on shape spaces as Riemannian manifolds. Thus, it is assumed $u_i\in \mathcal{U}_i$ for all $i=1,\dots,N$ for Riemannian manifolds $(\mathcal{U}_i, G^i)$, i.e., $ u$ is an element of the \emph{product shape space} $  \mathcal{U}^N \colon= \cU_1 \times \dots \times \cU_N= \prod_{i=1}^{N}\mathcal{U}_i$.
If there is only one shape, the notation $ \mathcal{U}$ instead of $ \mathcal{U}^1$ is used.
Since a Riemannian metric $G^i$ varies with the point of evaluation, it will be denoted $G_{p}^i(\cdot,\cdot)\colon T_{p} \mathcal{U}_i \times T_{p} \cU_i \to \R$, to highlight its dependence on the point $p$.  
Hereby, the \emph{tangent space} at a point $p \in \mathcal{U}_i$ is defined in its geometric version as 
$$T_p\mathcal{U}_i=\{ c\colon \R \rightarrow \mathcal{U}_i: c\text{ differentiable}, c(0) =p\}/\sim,$$
where the equivalence relation for two differentiable curves  $c,\tilde{c}\colon\R \rightarrow \mathcal{U}_i$  with $c(0) = \tilde{c}(0) =p$ is defined as follows:
\[
c \sim \tilde{c} \Leftrightarrow \tfrac{\d}{\dt}\phi_{\alpha}(c(t))\vert_{t=0}  =\tfrac{\d}{\dt} \phi_{\alpha}(\tilde{c}(t))\vert_{t=0}\,\forall\,\alpha\text{ with }u \in U_\alpha,
\]
where $\{(U_\alpha, \phi_\alpha)\}_\alpha$ is the atlas of $\mathcal{U}_i$.


\begin{figure}
	\begin{center}
		\begin{overpic}[width=.5\textwidth]{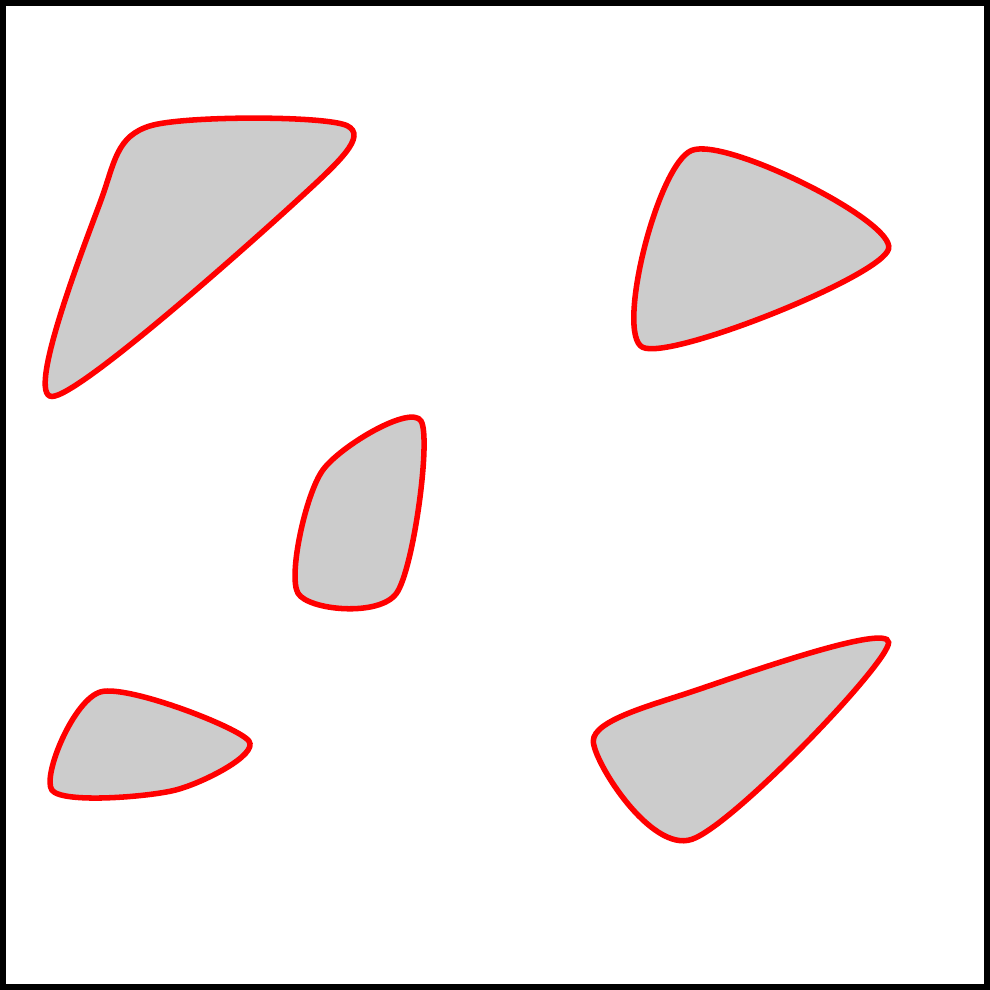}
			\put (-12,50){$\partial D$}
			\put (15,16){\textcolor{red}{$u_1$}}
			\put (18, 28.7){\color{blue}\linethickness{0.2mm}\vector(1,2.5){2.2}}
			\put (14.3, 30.3){\textcolor{blue}{$\textup{n}_1$}}
			\put (34.5,35){\textcolor{red}{$u_2$}}	
			\put (42, 45){\color{blue}\linethickness{0.2mm}\vector(1,-0.15){7}}
			\put (43.5, 46.6){\textcolor{blue}{$\textup{n}_2$}}
			\put (17,65){\textcolor{red}{$u_3$}}
			\put (25, 75){\color{blue}\linethickness{0.2mm}\vector(1,-1.1){4.5}}
			\put (27.1, 73.8){\textcolor{blue}{$\textup{n}_3$}}
			\put (58,75){\textcolor{red}{$u_4$}}
			\put (80, 68.8){\color{blue}\linethickness{0.2mm}\vector(0.7,-2){2.2}}
			\put (81.5, 66){\textcolor{blue}{$\textup{n}_4$}}
			\put (87,28){\textcolor{red}{$u_5$}}
			\put (73.5, 31.5){\color{blue}\linethickness{0.2mm}\vector(-0.6,2){1.8}}
			\put (74.3, 34){\textcolor{blue}{$\textup{n}_5$}}
		\end{overpic}
	\end{center}
	\caption{Illustration of the domain $D$ in $\R^2$ for $N=5$.}
	\label{fig_domain}
\end{figure}

A main focus in shape optimization is in the investigation of 
shape functionals.
A shape functional on $\mathcal{U}^N$ is given by a function 
$$j\colon \mathcal{U}^N \to \mathbb{R}\text{, } u\mapsto j(u).$$
An \emph{unconstrained shape optimization problem} is given by
\begin{equation}
\label{minproblem}
\min_{u\in \mathcal{U}^N } j(u).
\end{equation}
Often, shape optimization problems are constrained by equations, e.g., equations involving an unknown function of two or more variables and at least one partial derivative of this function. The objective may depend on not only the shapes $u$ but also the  \emph{state variable} $y$, where the state variable is the solution of the underlying constraint. In other words, one has a shape functional of the form $\hat{j}\colon \mathcal{U}^N \times \mathcal{Y} \rightarrow \R$ and an operator $e\colon \mathcal{U}^N \times \mathcal{Y} \rightarrow \mathcal{W}$, where $\mathcal{Y}$ and $\mathcal{W}$ are Banach spaces. One therefore has a \emph{constrained shape optimization problem} of the form 
\begin{equation}
\begin{aligned}
&\min_{(u,y)\in \mathcal{U}^N \times \mathcal{Y}} \hat{j}(u,y)\label{minproblem2}\\
&\hspace*{.7cm}\text{s.t. } \,\, 
e(u,y)=0.
\end{aligned}
\end{equation}
When $e$ in \eqref{minproblem2} represents a PDE,  the shape optimization problem is called \emph{PDE-constrained}. Formally, if the PDE has a (unique) solution given any choice of $u$, then the so-called \textit{control-to-state operator} $S\colon \mathcal{U}^N \rightarrow \mathcal{Y}$, $u \mapsto y$ is well-defined. With $j(u) := \hat{j}(u, S u)$ one obtains an unconstrained optimization problem of the form \eqref{minproblem}. This observation justifies the following work with \eqref{minproblem}, although later in the application section, a problem of the form \eqref{minproblem2} is presented.

Subsection \ref{section:DetOpt} concentrates on $N=1$ and summarizes how the theory of deterministic PDE-constrained shape optimization problems can be connected to the differential-geometric structure of shape spaces. Here, in view of obtaining efficient gradient based algorithms one focuses on the Steklov--Poincar\'{e} metric considered  in \cite{SchulzSiebenbornWelker2015:2}. Afterwards, subsection \ref{section:product-manifold} concentrates on $N>1$, which lead to product shape manifolds. It will be shown that it is possible to define a product metric and use this to justify the main result of this chapter, theorem \ref{thm:equivalence_variational_formulations}. It is rigorously argued that vector fields induced by the  shape derivative give descent directions with respect to each individual element of the shape space as well as the corresponding element of the product shape space.

\subsection{Optimization on shape spaces with Steklov--Poincar\'{e} \\ metric}
\label{section:DetOpt}

In this subsection, optimization with respect to one shape $u\in \mathcal{U}$ is discussed, i.e., $N=1$ is chosen. Additionally, the connection between Riemannian geometry on the space of smooth shapes and shape optimization is analyzed. Please note in the following, one shape is both an element of a manifold and a subset of $\R^{d}$.
In classical shape calculus, a shape is considered to be a subset of $\R^{d}$, only. However, this subsection explains that equipping a shape with additional structure provides theoretical advantages, enabling the use of concepts from differential geometry like the pushforward, exponential maps, etc.

\bigskip

\noindent
\textbf{Shape calculus.}
First,  notation and terminology of basic shape optimization concepts will be set up. 
For a detailed introduction into shape calculus, the reader is refereed to the monographs \cite{Delfour-Zolesio-2001,SokoZol}. 
The concept of shape derivatives is needed.  In order to define these derivatives, one concentrates on the shape $u$ as subset of $D\subset \R^{d}$ and considers a
family $\{F_t\}_{t\in[0,T]}$ of mappings $F_t\colon \overline{D}\to\mathbb{R}^d$ 
 such that $F_0=\operatorname{id}$, where $\overline{D}$ denotes the closure of $D$ and $T>0$.
This family transforms shapes $u$ into new \emph{perturbed shapes}
$$F_t(u) = \{  F_t(x)\colon x\in u\} . $$
Such a transformation can be described by the \emph{velocity method} or by the \emph{perturbation of identity}; cf.~\cite[pages 45 and 49]{SokoZol}. 
In the following, the perturbation of identity is considered. It is defined by $F_t^W(x):= x+tW(x)$, where $W\colon \overline{D} \rightarrow \R^{d}$ denotes a sufficiently smooth vector field.

\begin{definition}[Shape derivative] 
	Let $D\subset \mathbb{R}^{d}$ be open, $u\subset D$ and $k\in\mathbb{N}\cup \{\infty\}$.
The Eulerian derivative of a shape functional $j$ at $u$ in direction $W\in\mathcal{C}^k_0(D,\mathbb{R}^d)$ is defined by 
\begin{equation}
\label{eulerian}
dj(u)[W]:= \lim\limits_{t\to 0^+}\frac{j(F_t^W(u))-j(u)}{t}. 
\end{equation}
If for all directions $W\in\mathcal{C}^k_0(D,\mathbb{R}^d)$ the Eulerian derivative \eqref{eulerian} exists and the mapping 
\begin{equation*}
 \mathcal{C}^k_0(D,\mathbb{R}^d)\to \mathbb{R}, \ W\mapsto dj(u)[W]
\end{equation*}
is linear and continuous, the expression $dj(u)[W]$ is called the shape derivative of $j$ at $u$ in direction $W\in\mathcal{C}^k_0(D,\mathbb{R}^d)$. In this case, $j$ is called shape differentiable of class $\mathcal{C}^k$ at $u$.
	\label{def_shapeder}
\end{definition}

The proof of existence of shape derivatives can be done via different approaches like the Lagrangian~\cite{Sturm2013}, min-max~\cite{Delfour-Zolesio-2001}, chain rule~\cite{SokoZol}, rearrangement~\cite{Ito-Kunisch-Peichl} methods, among others. 
If the objective functional is given by a volume integral, under the assumptions of the Hadamard Structure Theorem (cf.~\cite[Theorem 2.27]{SokoZol}), the shape derivative can be expressed as an integral over the domain, the so-called \emph{volume} or \emph{weak formulation}, and  also as an integral over the boundary, the so-called \emph{surface} or \emph{strong formulation}.
Recent advances in PDE-constrained optimization on shape manifolds are based on the surface formulation, also called \emph{Hadamard-form}, as well as intrinsic shape metrics. Major effort in shape calculus has been devoted towards such surface expressions (cf.~\cite{Delfour-Zolesio-2001,SokoZol}), which are often very tedious to derive.
When one derives a shape derivative of an objective functional, which is given by an integral over the domain, one first gets the volume formulation. This volume form can be converted into its surface form by applying the integration by parts formula. In order to apply this formula, one needs a higher regularity of the state and adjoint of the underlying PDE. 
Recently, it has been shown that the weak formulation has numerical advantages, see, for instance, \cite{Berggren,Langer-2015,HipPag_2015,Paganini}. In \cite{HardestyKouriLindsayRidzalStevensViertel:2020:1, LaurainSturm2013}, practical advantages of volume shape formulations have also been demonstrated.

\bigskip

\noindent
\textbf{Shape calculus combined with differential geometric structure of shape manifolds.}
Solving shape optimization problems is made more difficult by the fact that the set of permissible shapes  generally does not allow a vector space structure, which is one of the main difficulties for the formulation of efficient optimization methods. In particular, without a vector space structure, there is no obvious distance measure, which is needed to establish convergence properties. In many practical applications, this difficulty is circumvented by characterizing the shapes of interest by finitely many parameters such that the parameters are elements of a vector space. Often, a priori parametrizations of the shapes of interest are used because of the resulting vector space framework matching standard optimization software. However, this limits the insight into the optimal shapes severely, because only shapes corresponding to the a priori parametrization can be reached. 
One possibility to avoid this limitation would be to focus on shape optimization in the setting of shape spaces. If one cannot work in vector spaces, shape spaces which allow a Riemannian structure like Riemannian manifolds are the next best option. 

Now, a shape $u \subset D$ is viewed also as an element of a Riemannian shape manifold $(\mathcal{U},G)$.  This means that the shape functional $J$ is defined on the manifold. Next, the derivative of a scalar field $j\colon \mathcal{U}\to\R$ needs to be defined.

\begin{definition}[Pushforward]
For each point $u\in\mathcal{U}$, the pushforward associated with $j\colon \mathcal{U}\to\R$ is given by the map
$$(j_\ast)_u\colon T_u\mathcal{U}\to \R, \, c\mapsto \frac{\d}{\d t} j(c(t)) \vert_{t=0}  =(j\circ c)'(0) .$$
\label{Def:Pushforward}
\end{definition}

\begin{remark}
In general, the pushforward is defined for a map $f$ between two differential manifolds $M$ and $N$. The definition depends on the used tangent space. In this setting, where tangent spaces are defined as equivalence classes of curves, the pushforward of $f\colon M\to N$ at a point $p\in M$ is generally given by a map between the tangent spaces, i.e.,
$ ( f_\ast)_p\colon T_pM\to T_{f(p)}N$ with $ (f_\ast)_p (c):=\frac{\d}{\d t}  f(c(t)) \vert_{t=0}  =(f\circ c)'(0) .$
\end{remark}

With the help of the pushforward, it is possible to define the Riemannian shape gradient.

\begin{definition}[Riemannian shape gradient]
	Let $(\mathcal{U},G)$ be a Riemannian manifold and $j\colon\mathcal{U}\to\R$.
A Riemannian shape gradient $\nabla j(u)\in T_u\mathcal U$ is defined by the relation
\begin{equation*}
\label{ShapeGradient}
(j_\ast)_u w= G_u(\nabla j(u),w)\quad \forall\, w\in T_u\mathcal{U}.
\end{equation*}
\end{definition}

Thanks to the definition of the Riemannian shape gradient, it is possible to formulate the gradient method on the Riemannian manifold $(\mathcal{U},G)$ (cf.~algorithm~\ref{Algo:Gradient}).
The Riemannian shape gradient with respect to $G$ is computed from \eqref{CompRiemShapeGradient}. The negative solution $-v^k$ is then used as descent direction for the objective functional $j$ in each iteration $k$. 
In order to update the shape iterates, the exponential map in algorithm~\ref{Algo:Gradient} is used; because the calculations of optimization methods on manifolds have to be performed in tangent spaces, points from a tangent space have to be mapped to the manifold in order to define the next iterate. Figure \ref{fig:exponential} illustrates this situation. With \eqref{Update_gS}
the $(k+1)$-th shape iterate $u^{k+1}$ is calculated, 
where $\exp_{u^k}\colon T_{u^k}\mathcal{U}\to \mathcal{U},\,z\mapsto \exp_{u^k}(z)$ denotes the exponential map; this defines a local diffeomorphism between the tangent space $T_{u^k}\mathcal{U}$ and the manifold $\mathcal{U}$ by following the locally uniquely defined geodesic starting in the $k$-{th} shape iterate $u^k\in\mathcal{U}$ in the direction $-v^k \in T_{u^k}\mathcal{U}$. 
In algorithm \ref{Algo:Gradient}, an Armijo backtracking line search technique is used to calculate the step-size $t^k$ in each iteration. Here, the norm introduced by the metric under consideration is needed, $\|\cdot\|_{G}:=\sqrt{G(\cdot,\cdot)}$. 

\begin{algorithm}
	\caption{Steepest descent method on $(\mathcal{U},G)$ with Armijo backtracking line search}
	\label{Algo:Gradient}
	\begin{algorithmic}[0]
		\STATE\textbf{Require:} Objective function $j$ on $(\mathcal{U},G)$
		\vspace{.1cm}
		\STATE \textbf{Input:} Initial shape $u^0\in \mathcal{U}$ \hspace*{15cm}
		
		\hspace*{1.1cm}constants $\hat{\alpha}>0$ and $	{\displaystyle \sigma, \rho \in (0,1)}$ for Armijo backtracking strategy
		\vspace{.3cm}
		\STATE \textbf{for} $k=0,1,\dots$ \textbf{do}
		\vspace{.1cm}
		\STATE [1]  Compute the Riemannian shape gradient $v^k\in T_{u^k}\mathcal{U}$ with respect to $G$ by solving
		\begin{equation}
		\label{CompRiemShapeGradient}
			(j_\ast)_{u^k}w=G_{u^k}(v^k,w)\quad \forall\, w\in T_{u^k}\mathcal{U}.
		\end{equation}
		\STATE [2] Compute Armijo backtracking step-size: 
		\vspace{.1cm}
		\STATE \hspace*{1cm} Set $\alpha := \hat{\alpha}$.
		\STATE \hspace*{1cm} \textbf{while} $ j(\operatorname{exp}_{u^k}(-\alpha v^k)) > j(u^k)-\sigma\alpha \left\|v^k\right\|^2_{G}$ 
		\STATE \hspace*{1cm} Set $ \alpha :=\rho \alpha $.
		\STATE \hspace*{1cm} \textbf{end while}
		\STATE \hspace*{1cm} Set $t^k:=\alpha$.
		\vspace{.1cm}
		\STATE [3] Set 
		\begin{equation}
		\label{Update_gS}
		u^{k+1}:= \operatorname{exp}_{u^k}(-t^k v^k).
		\end{equation}
		\STATE \textbf{end for}
		\vspace{.3cm}
	\end{algorithmic}
\end{algorithm}

\bigskip
\noindent
\textbf{Optimization on the space of smooth shapes.}
This chapter focuses on the manifold of $d$-dimensional smooth shapes. 
The \emph{set of all $(d-1)$-dimensional smooth shapes}  is considered in \cite{MichorMumford2} and can be characterized by
\begin{equation*}
\label{B_e_2dim}
B_e= B_e(S^{d-1},\R^{d}):= \mathrm{Emb}(S^{d-1},\R^{d})/\mathrm{Diff}(S^{d-1}).
\end{equation*}
Here, $\mathrm{Emb}(S^{d-1},\R^{d})$ denotes the set of all embeddings from the unit circle $S^{d-1}$ into $\R^{d}$, and $\mathrm{Diff}(S^{d-1})$ is the set of all diffeomorphisms from $S^{d-1}$ into itself.  
In \cite{KrieglMichor}, it is verified that the shape space $B_e$ is a smooth manifold. 
The tangent space is isomorphic to the set of all smooth normal vector fields along $c$, i.e.,
\begin{equation*}
\label{isomorphismTcBe}
T_uB_e(S^{d-1},\mathbb{R}^{d})\cong\left\{h\colon h=\alpha \operatorname{n},\, \alpha\in \mathcal{C}^\infty(S^{d-1})\right\},
\end{equation*}
where $\operatorname{n}$ denotes the outer unit normal field to the shape $u$. 
Next, the connection of shape derivatives with the geometric structure of $B_e$ is addressed. This combination results in efficient optimization techniques on $B_e$.

In view of obtaining gradient-based optimization approaches, the gradient needs to be specified. The gradient will be characterized by the chosen Riemannian metric on $B_e$. 
Several Riemannian metrics on this shape space are examined, e.g., \cite{BauerHarmsMichor,MichorMumford2,MichorMumford}. All these metrics arise from the $L^2$-metric by putting weights, derivatives or both in it. 
 In this manner, one gets three groups of metrics: the \emph{almost local metrics} which arise by putting weights in the $L^2$-metric (cf.~\cite{BauerHarmsMichor_SobolevII,MichorMumford}), the \emph{Sobolev metrics} which arise by putting derivatives in the $L^2$-metric (cf.~\cite{BauerHarmsMichor,MichorMumford}) and the \emph{weighted Sobolev metrics} which arise by putting both weights and derivatives in the $L^2$-metric (cf.~\cite{BauerHarmsMichor_SobolevII}).
In \cite{Schulz}, the curvature weighted metric, which is an almost local metric, was considered in shape optimization to formulate approaches for unconstrained shape optimization problems. The first Sobolev metric was used in \cite{Schulz2015a} to formulate gradient-based methods to solve PDE-constrained shape optimization problems. In \cite{Welker_diffeological}, the gradient-based results from \cite{Schulz2015a} are extended by formulating the covariant derivative with respect to the first Sobolev metric. Thanks to that derivative, a Riemannian shape Hessian with respect to the first Sobolev metric could be specified, which opens the door to formulating higher-order methods in space of smooth shapes. 
If Sobolev or almost local metrics are considered, one has to deal with strong formulations of shape derivatives. 
An intermediate and equivalent result in the process of deriving these expressions is the weak expression as already mentioned above.
These weak expressions are preferable over strong forms.
Not only does one save analytical effort, but one needs lower regularity for the weak expressions. Moreover, the weak expressions are typically easier to implement numerically.
However, in the case of the more attractive weak formulation, the shape manifold $B_e$ and the corresponding Sobolev or almost local metrics are not appropriate. 
One possible approach to use weak forms is addressed in \cite{SchulzSiebenbornWelker2015:2}, which considers Steklov--Poincar\'{e} metrics. 
In the following,  some of the main results related to this metric from \cite{SchulzSiebenbornWelker2015:2} are summarized in view of obtaining efficient optimization methods, also for shape optimization problems under uncertainty.
For a comparison of the approach resulting from considering the first Sobolev and the approach based on the Steklov--Poincar\'{e} metric, the reader is referred to \cite{SchulzSiebenborn2016,Welker2016,Welker_diffeological}.

The \emph{Steklov--Poincar\'{e} metric} is given by
	\begin{equation}\label{definition_Steklovmetric}
	\begin{split}
	g^S\colon H^{1/2}(u)\times H^{1/2}(u) & \to \mathbb{R},\\
	(v,w) &\mapsto 
	\int_{u} v\cdot(S^{pr})^{-1}w\ \ds.
	\end{split}
	\end{equation}
	Here $S^{pr}$ denotes the projected Poincar\'e--Steklov operator, which is given by
	 $$S^{pr}\colon  H^{-1/2}(u) \to H^{1/2}(u),\
	v \mapsto \textup{tr}(V)\cdot \operatorname{n}$$ 
	 with $\textup{tr}\colon  H^1_0(D,\mathbb{R}^d) \to H^{1/2}(u,\mathbb{R}^d)$ denoting the trace operator on Sobolev spaces for vector-valued functions and $V\in H^1_0(D,\mathbb{R}^d)$ solving the Neumann problem 
	\begin{equation*}\label{weak-elasticity-N2}
	a(V,W)=\int_{u} v \, (\textup{tr}(W) \cdot \textup{n})\, \textup{d}s  \quad \forall\hspace{.3mm}  W\in H^1_0(D,\mathbb{R}^d),
	\end{equation*}
	where $a\colon H_0^1(D,\R^d) \times H_0^1(D,\R^d) \rightarrow \R$ is a symmetric and coercive bilinear form. 
	Note that a Steklov--Poincar\'{e} metric depends on the choice of the bilinear form. Thus, different bilinear forms lead to various Steklov--Poincar\'{e} metrics.
To define a metric on $B_e$, the Steklov--Poincar\'e metric is restricted to   the mapping $g^S\colon T_u B_e \times T_u B_e \rightarrow \R$. 
	
Next,  the connection between $B_e$ equipped with the Steklov--Poincar\'{e} metric $g^S$ and shape calculus is stated.
As already mentioned, the shape derivative can be expressed in a weak and strong form under the assumptions of the Hadamard Structure Theorem. 
The Hadamard Structure Theorem actually states the existence of a scalar distribution $r$ on the boundary  of a domain. However, in the following, it is always assume that $r$ is an integrable function. In general, if $r\in L^1(u)$, then $r$ is obtained in the form of the trace on $u$ of an element of $W^{1,1}(D)$. This means that it follows from Hadamard Structure Theorem that the shape derivative can be expressed more conveniently as
\begin{equation}
\label{bound_form}
d^{\text{surf}}j(u)[W]:=\int_u r(s)\left(W(s)\cdot \operatorname{n}(s)\right)  \ds .
\end{equation}
In view of the connection between the shape space $B_e$ with respect to the Steklov--Poincar\'{e} metric $g^S$ and shape calculus,  $r\in \mathcal{C}^\infty(u)$ is assumed. 
In contrast, if the shape functional is a pure volume integral, the weak form is given by
\begin{equation}
\label{vol_form}
d^{\text{vol}}j(u)[W]:=\int_D  RW(x)\, \dx,
\end{equation}
where $R$ is a differential operator acting linearly on the vector field $W$.

\begin{definition}[Shape gradient w.r.t. Steklov--Poincar\'{e} metric]
	\label{Def:Steklov}
	Let $r\in \mathcal{C}^\infty(u)$ denote the function in the shape derivative expression \eqref{bound_form}. Moreover, let $S^{pr}$ be the projected Poincar\'e--Steklov operator. A representation $v\in T_{u} B_e\cong\mathcal{C}^\infty(u)$ of the shape gradient in terms of $g^S$ is determined by 
	\begin{equation*}
	\label{Steklov_r}
	g^S(v,w)=\left(r,w\right)_{L^2(u)} \quad \forall w\in \mathcal{C}^\infty(u),
	\end{equation*}
	which is equivalent to
	\begin{equation}
	\label{equation_sgSteklov}
	\int_{u} w(s)\cdot [(S^{pr})^{-1}v](s) \d s=\int_{u} r(s)w(s) \d s \quad  \forall w\in \mathcal{C}^\infty(u).
	\end{equation}
\end{definition}

From \eqref{equation_sgSteklov}, one gets that a vector $V\in H_0^1(D,\R^d)\cap \mathcal{C}^\infty (D,\R^d)$ can be viewed as an extension of a Riemannian shape gradient to the hold-all domain $D$ because of the identities 
\begin{equation}
\label{Steklov_identity}
g^S(v,w)= d^{\text{surf}}j(u)[W]=a(V,W)\quad \forall W\in H_0^1(D,\R^d)\cap \mathcal{C}^\infty (D,\R^d),
\end{equation}
where $v=\text{tr}(V) \cdot \textup{n},w=\text{tr}(W) \cdot \textup{n}\in T_{u}B_e$. 
Since the strong formulation of the shape derivative arises from the weak formulation under the assumptions of the Hadamard Structure Theorem, one could also choose $d^\textup{vol}j(u)[W]$ in \eqref{Steklov_identity}.
This fact together with  identity \eqref{Steklov_identity} 
allows one to consider weak expressions of shape derivatives to compute the shape gradient with respect to $g^S$. Since both expressions of the shape derivative can be used, only $dj(u)[W]$ is written in the following.
	In order to compute the shape gradient, 
one has to solve the so-called \emph{deformation equation}
\begin{equation}
a(V, W) = dj(u)[W] \quad \forall W\in H_0^1(D,\R^d)\cap \mathcal{C}^\infty (D,\R^d).
\label{deformatio_equation}
\end{equation}
One option for $a(\cdot, \cdot)$ is the bilinear form associated with linear elasticity, i.e.,
\begin{equation*}
\label{eq:linear-elasticity-bilinear-form}
a^{\text{elas}}(V, W):=\int_D (\lambda \text{tr}  ( \epsilon(V)  )\text{id} + 2  \mu \epsilon(V)) : \epsilon(W)\, \dx,
\end{equation*}
where $\epsilon(W):= \frac{1}{2} \, (\nabla W + \nabla W^T)$, $A : B$ denotes the Frobenius inner product for two matrices $A, B$ and $\lambda,\mu \in \R$ denote the so-called Lam\'{e} parameters.

\begin{remark}
\label{remark:shape_gradient_not_longer_C_infty}
	Note that it is not ensured that $V\in H^1_0(D,\mathbb{R}^d)$ solving the PDE (in weak form)
	\begin{equation*}
	a(V, W) = dj(u)[W] \quad \forall W\in H_0^1(D,\R^d)
	\end{equation*}
	 is $\mathcal{C}^\infty(D,\R^d)$. Thus, $v=S^{pr}r=(\tr V)\cdot \operatorname{n}$ is not necessarily an element of $T_uB_e$.
	However, under special assumptions depending on the coefficients of a second-order partial differential operator and the right-hand side of the PDE, 	a weak solution $V$ that is at least $H^1_0$-regular is $\mathcal{C}^\infty$ (cf.~\cite[Section~6.3, Theorem~6]{Evans1998}).
\end{remark}

Thanks to the definition of the gradient with respect to $g^S$, algorithm~\ref{Algo:Gradient} can be applied on $(B_e,g^S)$.
In order to be in line with the above theory, it is assumed in algorithm~\ref{Algo:Gradient} that in each iteration $k$, the shape $u^k$ is a subset of the hold-all domain $D$. 
The Riemannian shape gradient is computed with respect to $g^S$ from \eqref{deformatio_equation}. The negative solution $-v=-\operatorname{tr}V\cdot \operatorname{n}$ is then used as descent direction for the objective functional $j$. 
The exponential map is used to update the shape iterates in algorithm~\ref{Algo:Gradient}.
Instead of the exponential map, it is also possible to use the concept
 of a retraction; this is a smooth mapping $\mathcal{R}\colon T\mathcal{U}\to \mathcal{U}$ satisfying $\mathcal{R}^{u^k}(0_{u_k})=u_k$ and the so-called local rigidity condition $\mathcal{R}^{u^k}_\ast(0_{u_k})=\text{id}_{T_{u^k}\mathcal{U}}$, where  $\mathcal{R}^{u^k}$ denotes the restriction of $\mathcal{R}$ to $T_{u^k}\mathcal{U}$, $0_{u_k}$ is the zero element of $T_{u^k}\mathcal{U}$ and $\mathcal{R}^{u^k}_\ast(0_k)$ denotes the pushforward of $0_{u_k}\in T_{u^k}\mathcal{U}$ by $\mathcal{R}$.
An example of a retraction is
\begin{equation}
\label{retraction}
\mathcal{R}^{u^k}\colon T_{u^k}\mathcal{U} \to \mathcal{U},\, 
v \mapsto \mathcal{R}^{u^k}(v)\colon = u^k+v
\end{equation}
(cf.  \cite{SchulzWelker}).
The retraction is only a local approximation; for large vector fields, the image of this function may no longer belong to $B_e$. This retraction is closely related to the perturbation of the identity, which is defined for vector fields on the domain $D$.  Given a starting shape $u^{k+1}$ in the $k$-{th} iteration of algorithm \ref{Algo:Gradient}, the perturbation of the identity acting on the domain $D$ in the direction $V^k$, where $V^k$ solves~\eqref{deformatio_equation} for $u=u^k$, gives
\begin{equation}
\label{deformation}
 D(u^{k+1}) = \{x \in D \, | \, x = x^k - t^k V^k \}.
\end{equation}
As vector fields induced from solving~\eqref{deformatio_equation} have less regularity than is required on the manifold, it is worth mentioning 
that the shape $u^{k+1}$ resulting from this update could leave the manifold $B_e$. To summarize, either large or less smooth vector fields can contribute to the iterate $u^{k+1}$ leaving the manifold. One indication that the iterate has left the manifold would be that the curve $u^{k+1}$ develops corners. Another possibility is that the curve $u^{k+1}$ self-intersects. One way to avoid this behavior is by preventing the underlying mesh to break (meaning elements from the finite element discretization overlap). One can avoid broken meshes as long as the step-size is not chosen to be too large. 

\begin{figure}
	\begin{center}
	\begin{overpic}[width=.63\textwidth]{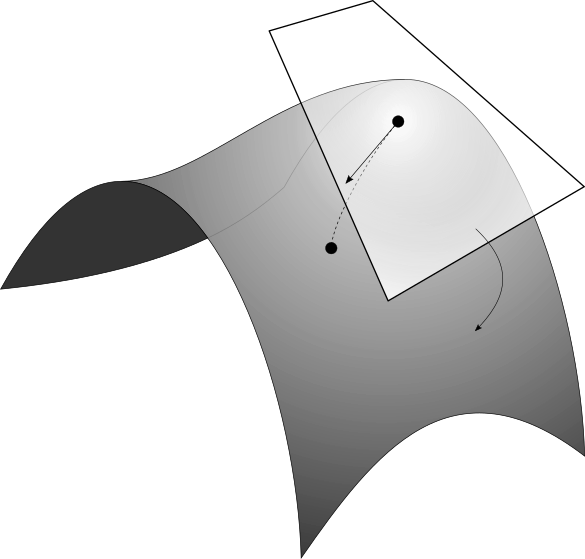}
	\put (55,88) {\large{$T_{u^k}\mathcal{U}$}}
	\put (70,77) {\large{$u^k$}}
	\put (65,66) {\large{$-t^kv^k$}}
	\put (54,46) {\large{$u^{k+1}$}}
	\put (74,44) {\large{$\exp_{u^k}$}}
	\put (52,15) {\large{$\mathcal{U}$}}
	\end{overpic}
	\end{center}
	\caption{Iterate $u^{k+1}= \exp_{u^k}(-t^kv^k)$, where $\exp_{u^k}\colon T_{u^k}\mathcal{U}\to \mathcal{U}$.}
	\label{fig:exponential}
\end{figure}

\begin{remark}
\label{remark:deformation_practice}
In practice, the hold-all domain is discretized by a mesh, for instance by finite elements (FE). Then in each iteration $k$, one computes the vector field $V^k$ defined on the hold-all domain by solving \eqref{deformatio_equation} for $u=u^k$. The vector field then informs how to move the computational mesh. For instance, with a FE discretization, $V^k$ acts on each node of the FE mesh, which moves not only the shape but also all other nodes of the mesh. An example of this is later shown in the application in figure~\ref{fig:deterministic-vector-fields}.
\end{remark}

\subsection{Optimization of multiple shapes}
\label{section:product-manifold}

This subsection extends algorithm \ref{Algo:Gradient} to multiple shapes $u=(u_1,\dots,u_N)\in\mathcal{U}^N$ with $N>1$ and $\mathcal{U}^N=\prod_{i=1}^{N}\mathcal{U}_i$ for Riemannian manifolds $(\cU_i,G^i)$. 
For this, the concepts of the pushforward, Riemannian shape gradient, and shape derivative needs to be generalized.
In view of applications in shape optimization, the metric $\mathcal{G}^N$ on the product manifold is related later to the  Steklov--Poincar\'e metric. As a main contribution,  the computation of vector fields extended to the hold-all domain is discussed. 

Analogously to \cite[3.3.12 Proposition]{abraham2012manifolds}, one can identify the tangent bundle $T\mathcal{U}^N$ with the product space $T\mathcal{U}_1\times\dots\times T\mathcal{U}_N$. 
In particular, there is an identification of the tangent space of the product manifold $\cU^N$ in the point $u$; more precisely, 
\begin{equation*}
\label{Multi-tangentspace}
T_u \cU^N \cong T_{u_1} \cU_1 \times \dots \times T_{u_N} \cU_N.
\end{equation*}
Let $\pi_i\colon \cU^N\to \cU_i$, $i=1, \dots, N$, be the $N$ canonical projections. With these identifications, one can then define the product metric $\mathcal{G}^N$ to the product shape space $\mathcal{U}^N$. For this, one needs the concept of the pushforward and the pullback by $\pi_i$.
	For each point $u\in \mathcal{U}^N$, the \emph{pushforward associated with canonical projections} $\pi_i$, $i=1, \dots, N$,  is given by the map
	$$(\pi_{i_\ast})_u\colon T_u\mathcal{U}^N\to T_{\pi_i(u)}\mathcal{U}_i,\, \mathfrak{c}\mapsto \frac{\d}{\d t} \pi_i(\mathfrak{c}(t)) \vert_{t=0}  =(\pi_i\circ \mathfrak{c})'(0).$$
	The \emph{pullback by the canonical projections} $\pi_i$, $i=1, \dots, N$, is the linear map from the space of 1-forms on $\mathcal{U}_i$ to the space of 1-forms on $\mathcal{U}^N$ and denoted by
	$$\pi_i^\ast\colon T^\ast_{\pi_i(u)}\mathcal{U}_i\to T^\ast_u\mathcal{U}^N,$$
	where  $T^\ast_{\pi_i(u)}\mathcal{U}_i$ and $T^\ast_u\mathcal{U}^N$ are the dual spaces of $T_{\pi_i(u)}\mathcal{U}_i$ and $T_u\mathcal{U}^N$, respectively.
Thanks to these definitions, the product metric $\mathcal{G}^N$ to the product shape space $\mathcal{U}^N$ can be defined:
\begin{equation*}
\mathcal{G}^N=\sum_{i=1}^N \pi_i^\ast G^i.
\end{equation*}
In particular, one has
\begin{equation}
\label{eq:product_metric}
\mathcal{G}^N_{u}(v,w) =\sum_{i=1}^{N}G_{\pi_i(u)}^{i}(\pi_{i_\ast}v,\pi_{i_\ast}w)\qquad\forall \,  v,w \in T_u \cU^N.
\end{equation}
Arguments identical to the ones in the proof of \cite[chapter 3, lemma 5]{o1983semi} make $(\mathcal{U}^N,\mathcal{G}^N)$ to a Riemannian product manifold.

In order to define a shape gradient of a functional $j\colon \mathcal{U}^N \rightarrow \R$ using the definition of the product metric in~\eqref{eq:product_metric},  definition \ref{Def:Pushforward} needs to be first generalized to the product shape space.

\begin{definition}[Multi-pushforward]
For each point $u\in \mathcal{U}^N$, the multi-pushforward associated with $J\colon\mathcal{U}^N \rightarrow \R$ is given by the map
\begin{equation*}
(j_\ast)_u\colon T_u\mathcal{U}^N\to \R,\, \mathfrak{c}\mapsto \frac{\d}{\d t} j(\mathfrak{c}(t)) \vert_{t=0}  =(j\circ \mathfrak{c})'(0).
\end{equation*}
\label{def:multi-shape}
\end{definition}

\begin{definition}[Riemannian multi-shape gradient]
The Riemannian multi-shape gradient for a shape functional $j\colon \mathcal{U}^N \rightarrow \R$ at the point $u=(u_1,\dots,u_N)\in\mathcal{U}^N$ is given by $v  \in T_u \cU^N$ satisfying
\begin{equation*}
\label{eq:shape_gradient_product_manifold}
\mathcal{G}^N_{u}\left(v,w \right) = (j_\ast)_uw \quad \forall \, w \in T_u \cU^N.
\end{equation*}
	\label{Def:MultipleShapeGradient}
\end{definition}

Notice that because of the identification of $T_u \cU^N$ with $T_{u_1} \cU_1 \times \dots \times T_{u_N} \cU_N$, the elements $\mathfrak{c}$ and $w$ from definitions~\ref{def:multi-shape} and \ref{Def:MultipleShapeGradient}, respectively, should be understood as vectors of the form $\mathfrak{c}(t) = (c_1(t),\ldots,c_N(t))$ and $w = (w_1,\ldots,w_N)$.

Thanks to the definition of the Riemannian multi-shape gradient, the steepest descent method on $(\mathcal{U}^N,\mathcal{G}^N)$ can be formulated (see algorithm \ref{Algo:multiples-shapes}). This method essentially follows the same steps as algorithm \ref{Algo:Gradient}. In algorithm \ref{Algo:multiples-shapes},  a \emph{multi-exponential map} 
\begin{equation}
\label{exp_multi}
\exp_{u^k}^N\colon T_{u^k}\mathcal{U}^N\to \mathcal{U}^N,\, z=(z_1,\dots,z_N)\mapsto (\exp_{u^k_1}z_1,\dots,\exp_{u^k_N}z_N)
\end{equation}
is needed
to update the shape vector $u^k=(u^k_1,\dots,u^k_N)$ in each iteration $k$, where
$\exp_{u^k_i}\colon T_{u^k_i}\mathcal{U}_i\to \mathcal{U}_i,\,z\mapsto \exp_{u^k_i}(z)$ for all $i=1,\dots,N$.
An Armijo backtracking line search strategy is used to calculate the step-size $t^k$ in each iteration. Here, the norm introduced on $\mathcal{G}^N$ is given by  $\|\cdot\|_{\mathcal{G}^N}:=\sqrt{\mathcal{G}^N(\cdot,\cdot)}$.

\begin{algorithm}
	\caption{Steepest descent method on $(\mathcal{U}^N,\mathcal{G}^N)$ with Armijo backtracking line search}
	\label{Algo:multiples-shapes}
	\begin{algorithmic}[0]
	\STATE \textbf{Require:} Objective function $j$ on $(\mathcal{U}^N,\mathcal{G}^N)$ 
\vspace{.1cm}
\STATE \textbf{Input:} Initial shape $u^0=(u^0_1,\dots,u^0_N)\in \mathcal{U}^N$\hspace*{15cm}\\
 
\hspace*{1cm} constants $\hat{\alpha}>0$ and $	{\displaystyle \sigma,\rho \in (0,1)}$ for Armijo backtracking strategy
\vspace{.3cm}

\STATE \textbf{for} $k=0,1,\dots$ \textbf{do}
\vspace{.1cm}
\STATE  [1] Compute the Riemannian multi-shape gradient $v^k$ with respect to $\mathcal{G}^N$ by solving
	\begin{equation}
\label{CompRiemShapeGradient_multi}
(j_\ast)_{u^k}w=G_{u^k}(v^k,w)\quad \forall\, w\in T_{u^k}\mathcal{U}^N.
\end{equation}
\STATE [2] Compute Armijo backtracking step-size: 
\vspace{.1cm}
\STATE \hspace*{1cm} Set $\alpha:=\hat{\alpha}$.
\STATE \hspace*{1cm} \textbf{while} $ j(\operatorname{exp}_{u^k}(-\alpha v^k)) > j(u^k)-\sigma\alpha \left\|v^k \right\|^2_{\mathcal{G}^N}$ 
\STATE \hspace*{1cm} Set $ \alpha :=\rho \alpha $.
\STATE \hspace*{1cm} \textbf{end while}
\STATE \hspace*{1cm} Set $t^k:=\alpha$.
\vspace{.1cm}
\STATE  [3] Set 
\begin{equation}
\label{Update_gS_multiple}
u^{k+1}:= \operatorname{exp}^N_{u^k}(-t^k v^k).
\end{equation}
\STATE \textbf{end for}
\vspace{.3cm}
	\end{algorithmic}
\end{algorithm}

So far in this subsection, each shape $u_i$ has been considered as an element of the Riemannian shape manifold $(\mathcal{U}_i,G^i)$, for all $i=1,\dots,N$, in order to define the multi-shape gradient with respect to the Riemannian metric $\mathcal{G}^N$.
In classical shape calculus, each shape $u_i$ is only a subset of $\R^d$. If one focuses on this perspective, then it is possible to generalize the classical shape derivative to a partial shape derivative and, thus, to a multi-shape derivative. With these generalized objects, a connection between shape calculus and the differential geometric structure of the product shape manifold $\mathcal{U}^N$ can be made.

Let $D$ be partitioned in $N$ non-overlapping Lipschitz domains $\Delta_1, \dots, \Delta_N$ such that $u_k \subset \Delta_k$. This construction will be referred as an \textit{admissible partition}. See figure \ref{fig:illustration_partition_D} for an example in $\R^2$. The indicator function $\mathbbm{1}_{\Delta_i}: D \rightarrow \{0,1\}$ is defined by $\mathbbm{1}_{\Delta_i}(x) = 1,$ if $x \in \Delta_i$, and $\mathbbm{1}_{\Delta_i}(x) = 0,$ otherwise.

\begin{figure}
	\begin{center}
		\begin{overpic}[width=.5\textwidth]{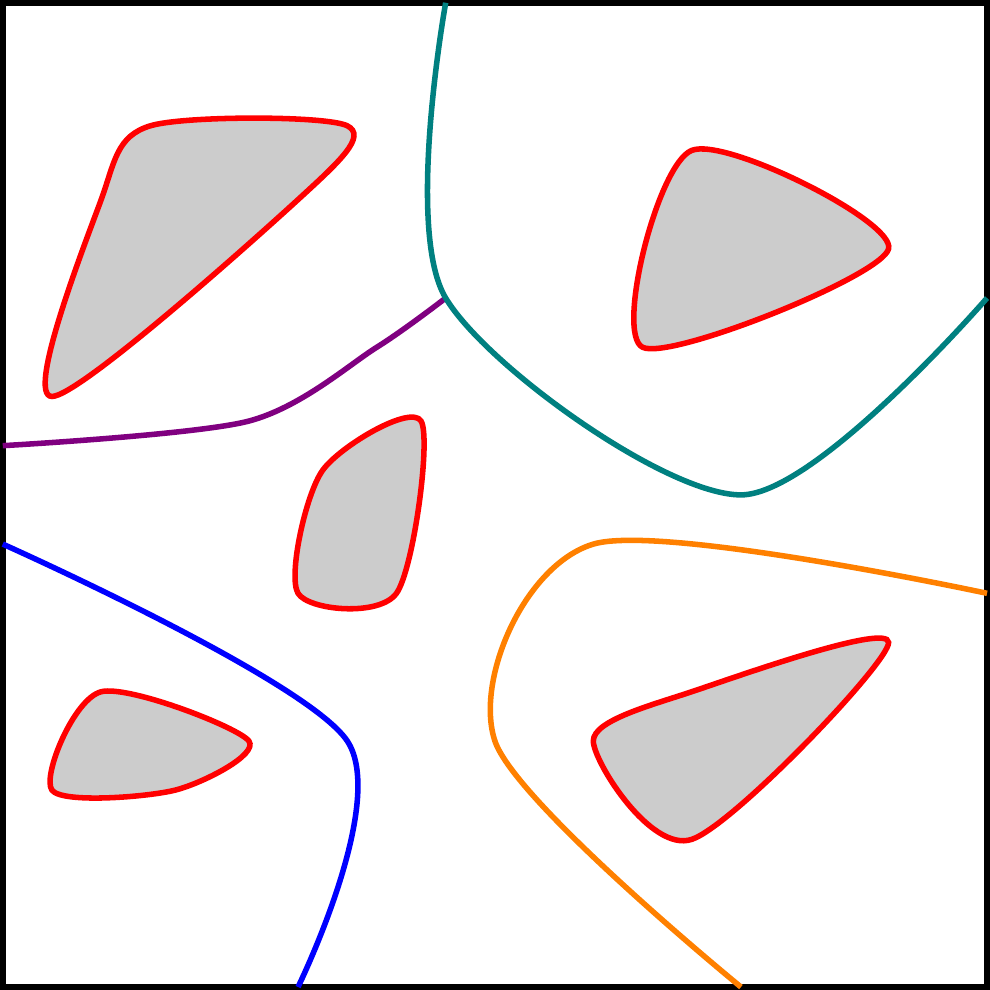}
			\put (-12,50){$\partial D$}
			\put (5,5){$\Delta_{1}$}
			\put (15,16){\textcolor{red}{$u_1$}}
			\put (35,35){\textcolor{red}{$u_2$}}
			\put (40,5){$\Delta_{2}$}
			\put (5,93){$\Delta_{3}$}
			\put (17,65){\textcolor{red}{$u_3$}}
			\put (75,90){$\Delta_{4}$}
			\put (58,75){\textcolor{red}{$u_4$}}
			\put (75,5){$\Delta_{5}$}
			\put (87,28){\textcolor{red}{$u_5$}}
		\end{overpic}
	\end{center}
	\caption{Illustration of a possible partition of $D\subset \R^2$.}
	\label{fig:illustration_partition_D}
\end{figure}

\begin{definition}[Multi-shape derivative]
Let $D\subset\R^d$ be open, $u=(u_1,\dots,u_N)$, and observe an arbitrary admissible partition with $u_i \subset \Delta_i$ for all $i=1,\dots,N$. Further, let $k\in\mathbb{N}\cup \{\infty\}$. For $i=1,\dots,N$, the $i$-th partial Eulerian derivative of a shape functional  $j$ at $u$ in direction $W\in\mathcal{C}^k_0(D,\mathbb{R}^d)$ is defined by
\begin{equation}
\label{eulerian_multi_partial}
d_{u_i}j(u)[\restr{W}{\Delta_i}]:=  \lim\limits_{t\to 0^+}\frac{j(u_1,\dots,u_{i-1},F_t^{\restr{W}{\Delta_i}}(u_i),u_{i+1},\dots,u_N)-j(u)}{t}.
\end{equation}
If for all directions $W\in\mathcal{C}^k_0(D,\mathbb{R}^d)$ the $i$-th partial Eulerian derivative  \eqref{eulerian_multi_partial} exists and the mapping 
\begin{equation*}
\mathcal{C}^k_0(D,\mathbb{R}^d)\to \mathbb{R}, \ W\mapsto d_{u_i}j(u)[\restr{W}{\Delta_i}]
\end{equation*}
is linear and continuous, the expression $d_{u_i}j(u)[\restr{W}{\Delta_i}]$ is called the $i$-th partial shape derivative of $j$ at $u$ in direction $W\in\mathcal{C}^k_0(D,\mathbb{R}^d)$.
If the $i$-th partial shape derivatives of $j$ at $u$ in the direction $W\in\mathcal{C}^k_0(D,\mathbb{R}^d)$ exist for all $i=1,...,N$, then
\begin{equation}
\label{eulerian_multi}
dj(u)[W]:= \sum_{i=1}^{N} d_{u_i}j(u)[\restr{W}{\Delta_i}]
\end{equation}
defines the multi-shape derivative of $j$ at $u$ in direction $W\in\mathcal{C}^k_0(D,\mathbb{R}^d)$.
\label{def_multi_shapeder}
\end{definition}

\begin{remark}
\label{rem:Hadamard-Thm-multiple}
For a single shape, by the Hadamard Structure Theorem, the shape derivative takes either the forms \eqref{bound_form} or \eqref{vol_form}. Using the definition above, the Hadamard Structure Theorem for multiple shapes can also be applied. The surface representation for $r_i \in L^1(u_i)$ is
\begin{equation}
\label{bound_form_multiple}
d^{\text{surf}}j(u)[W]:=\sum_{i=1}^N d^{\text{surf}}_{u_i} j(u)[\restr{W}{\Delta_i}]=\sum_{i=1}^N \int_{u_i} r_i(s)\left(\restr{W}{\Delta_i}(s)\cdot \operatorname{n}(s)\right)  \ds.
\end{equation}
The volume form is
\begin{equation}
\label{vol_form_multiple}
d^{\text{vol}}j(u)[W]: =  \sum_{i=1}^N  d^{\text{vol}}_{u_i}j(u)[\restr{W}{\Delta_i}]= \sum_{i=1}^N \int_{\Delta_i}  R_i \restr{W}{\Delta_i}(x)\, \dx,
\end{equation}
where $R_i$ is a differential operator acting linearly on the vector field $W$. In the volume form, it is clear that if $R_i = R$ for all $i$, the form \eqref{vol_form_multiple} reduces to
\begin{equation} 
\label{vol_form_multiple_reduced}
d^{\text{vol}}j(u)[W] = \int_D R W(x) \,\dx.
\end{equation}
\end{remark}

The expressions \eqref{bound_form_multiple} and \eqref{vol_form_multiple_reduced} suggest that the multi-shape derivative is in fact independent of the partition, provided it is an admissible one, i.e., with nonintersecting shapes and $u_i \subset \Delta_i$ for nonintersecting subdomains $\Delta_i$. This can be exploited computationally. It will be shown that to compute descent directions for the shape objective $j\colon \mathcal{U}^N \rightarrow \R$ according to \eqref{CompRiemShapeGradient_multi}, it is enough to solve the following variational problem: 
	\begin{equation}
	\label{eq:deformation_equation}
\text{find } V \in H_0^1(D,\R^d) \quad \text{such that}  \quad	a(V,W) =  dj(u)[W]\qquad\forall\, W\in H_0^1(D,\R^d). 
	\end{equation}
By virtue of remark~\ref{remark:shape_gradient_not_longer_C_infty}, the solution of~\eqref{eq:deformation_equation} is not necessarily $\mathcal{C}^\infty(D,\R^d)$, and these elements should be considered only formally.

In preparation for theorem~\ref{thm:equivalence_variational_formulations}, observe an admissible partition of $D$. The following Hilbert spaces are defined for all $ i=1\ldots, N$:
\begin{align*}
\bV_i &:=\{V \in H^1(\Delta_i,\R^d) \colon V = 0 \hbox{ on } \partial D \cap  \partial \Delta_i\}, \\
\bV_i^0 &= H_0^1(\Delta_i,\R^d).
\end{align*}
The following trace space for $\Gamma_i := \partial \Delta_i \backslash \partial D$ is defined:
\begin{equation*}
\Lambda_{i}:= \left\{ \eta \in H^{1/2}(\Gamma_i,\R^d) \colon  \eta = V\big|_{\Gamma_i}, \hbox{ for a suitable $V$ in }H_0^1(D,\R^d) \right\}.
\end{equation*}
One has (cf.~\cite[Subchapter 1.2]{quarteroni1999domain}) $\Lambda_i=H^{1/2}(\Gamma_i,\R^d)$ if $\Gamma_i \cap \partial D=\emptyset$. In  case $\Gamma_i \cap \partial D\not=\emptyset$, the space $\Lambda_{i}$ is strictly included in $H^{1/2}(\Gamma_i,\R^d)$, and is endowed with a norm which is larger than the norm of $H^{1/2}(\Gamma_i,\R^d)$. 
The trace space over $\Gamma:=\cup_{i=1}^N \Gamma_i$ is given by
\begin{equation*}
\Lambda:= \left\{ \eta \in H^{1/2}(\Gamma,\R^d) \colon  \eta = V\big|_{\Gamma}, \hbox{ for a suitable $V$ in }H_0^1(D,\R^d) \right\}.
\end{equation*}

The following main theorem justifies solving~\eqref{eq:deformation_equation} to obtain a vector field that gives descent directions with respect to each shape.
\begin{theorem}
\label{thm:equivalence_variational_formulations}
Observe an arbitrary admissible partition of $D$.
Suppose symmetric and coercive $a_i\colon \bV_i \times \bV_i \to \R $ are defined for all $i=1, \dots, N$ such that $a\colon H^1_0(D,\R^d)\times H^1_0(D,\R^d)\to \R$ satisfies $a(V,W)=\sum_{i=1}^{N} a_{i}(\restr{V}{\Delta_i},\restr{W}{\Delta_i})$ for all $V,W \in H^1_0(D,\R^d)$. 
Then the variational problem: find $V \in H_0^1(D,\R^d)$ such that
	\begin{equation}
	\label{eq:deformation_equation_new}
  \quad	a(V,W) =  dj(u)[W]\qquad\forall\, W\in H_0^1(D,\R^d)
	\end{equation}
 is equivalent to the system of variational problems: find $V_i \in \bV_i$, $i=1, \dots, N$ such that 
	\begin{subequations}
		\label{eq:multi_domain_weak_formulation}
	\begin{align}
	a_{i}(V_i,W_i) &= d_{u_i} j(u)[W_i] \qquad\forall\, W_i\in \bV_i^0, 	\label{eq:multi_domain_weak_formulation-a}\\ 
	V_i &= V_\ell                     \qquad\hbox{on all nonempty } \partial \Delta_i\cap \partial \Delta_\ell,	\label{eq:multi_domain_weak_formulation-b}\\
	\sum_{i=1}^N a_i(V_i,E_i \eta_i) &=  \sum_{i=1}^N d_{u_i} j(u)[E_i \eta_i]   \qquad \forall\,\eta \in \Lambda,	\label{eq:multi_domain_weak_formulation-c}
	\end{align}
		\end{subequations}
where $\eta_i=\eta|_{\Gamma_i}$ and $E_{i} \colon \Lambda_i \rightarrow \bV_i$ denotes an arbitrary extension operator, i.e., a continuous operator from $\Lambda_i$ to $\bV_i$ satisfying $(E_i \eta_i)|_{\Gamma_i} = \eta_i$.  
\end{theorem}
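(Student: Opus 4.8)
The plan is to prove the two formulations equivalent by establishing both implications through the natural identification $V_i = \restr{V}{\Delta_i}$, relying throughout on two standard facts about the spaces involved. First, a family of pieces $V_i \in H^1(\Delta_i,\R^d)$ glues to a single function in $H^1(D,\R^d)$ precisely when the interface traces match, i.e. $V_i = V_\ell$ on each nonempty $\partial \Delta_i \cap \partial \Delta_\ell$. Second, since each $\Delta_i$ is Lipschitz and $\partial \Delta_i = (\partial D \cap \partial \Delta_i) \cup \Gamma_i$, a function in $\bV_i$ lies in $\bV_i^0 = H_0^1(\Delta_i,\R^d)$ if and only if its trace on $\Gamma_i$ vanishes. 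Given these, the bilinearity of $a$ (with $a(V,W)=\sum_i a_i(\restr{V}{\Delta_i},\restr{W}{\Delta_i})$) together with the linearity of the multi-shape derivative (with $dj(u)[W]=\sum_i d_{u_i}j(u)[\restr{W}{\Delta_i}]$ from \eqref{eulerian_multi}) reduces everything to bookkeeping. Note that the coercivity and symmetry of the $a_i$ are not needed for the equivalence itself; they serve only for well-posedness of the individual problems.

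For the forward implication I would assume $V$ solves \eqref{eq:deformation_equation_new} and set $V_i := \restr{V}{\Delta_i} \in \bV_i$. Condition \eqref{eq:multi_domain_weak_formulation-b} is immediate since $V \in H^1(D,\R^d)$ has matching interface traces. To obtain \eqref{eq:multi_domain_weak_formulation-a}, I would take any $W_i \in \bV_i^0$, extend it by zero to a function $W \in H_0^1(D,\R^d)$, and insert it into \eqref{eq:deformation_equation_new}; since $W$ is supported in $\overline{\Delta_i}$, the sums on both sides collapse to their $i$-th terms. For \eqref{eq:multi_domain_weak_formulation-c}, given $\eta \in \Lambda$, I would glue the extensions $E_i \eta_i$ into a single test function $W_\eta$ with $\restr{W_\eta}{\Delta_i} = E_i \eta_i$; this is admissible, namely $W_\eta \in H_0^1(D,\R^d)$, precisely because the traces of $E_i \eta_i$ and $E_\ell \eta_\ell$ agree on shared interfaces (both equal the common $\eta$) and each $E_i \eta_i$ vanishes on $\partial D \cap \partial \Delta_i$. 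Inserting $W_\eta$ into \eqref{eq:deformation_equation_new} and splitting both sides by subdomain yields \eqref{eq:multi_domain_weak_formulation-c}.

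For the reverse implication I would assume $(V_1,\dots,V_N)$ solves \eqref{eq:multi_domain_weak_formulation}. By \eqref{eq:multi_domain_weak_formulation-b} the pieces glue to a well-defined $V \in H^1(D,\R^d)$ that vanishes on $\partial D$, hence $V \in H_0^1(D,\R^d)$. To check \eqref{eq:deformation_equation_new}, I would take an arbitrary $W \in H_0^1(D,\R^d)$, write $W_i := \restr{W}{\Delta_i}$ and $\eta := \restr{W}{\Gamma} \in \Lambda$, and use the decomposition $W_i = (W_i - E_i \eta_i) + E_i \eta_i$. Here the first summand lies in $\bV_i^0$ because its trace on $\Gamma_i$ is $\eta_i - \eta_i = 0$ and it already vanishes on $\partial D \cap \partial \Delta_i$. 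Expanding $a(V,W)=\sum_i a_i(V_i, W_i - E_i \eta_i) + \sum_i a_i(V_i, E_i \eta_i)$, I would apply \eqref{eq:multi_domain_weak_formulation-a} to the interior parts and \eqref{eq:multi_domain_weak_formulation-c} to the extension parts, then recombine by linearity to recover $\sum_i d_{u_i}j(u)[W_i] = dj(u)[W]$.

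I expect the main obstacle to be the careful handling of the trace spaces on the interface skeleton $\Gamma$: verifying the gluing characterization of $H^1(D,\R^d)$ and confirming that the split $W_i = (W_i - E_i \eta_i) + E_i \eta_i$ genuinely places the first summand in $\bV_i^0$, which is exactly where the decomposition $\partial \Delta_i = (\partial D \cap \partial \Delta_i) \cup \Gamma_i$ and the Lipschitz regularity of the $\Delta_i$ enter. A minor point worth recording is that \eqref{eq:multi_domain_weak_formulation-c} is independent of the particular extension operators $E_i$: any two choices differ by an element of $\bV_i^0$, on which \eqref{eq:multi_domain_weak_formulation-a} already forces the residual to vanish, so the interface condition is intrinsic to $\eta$.
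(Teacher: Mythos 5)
Your proposal is correct and follows essentially the same route as the paper's proof (which itself adapts the domain-decomposition argument of Quarteroni--Valli): restriction/gluing via the interface trace condition, zero-extension of $\bV_i^0$ test functions for \eqref{eq:multi_domain_weak_formulation-a}, the glued extension $E\eta$ for \eqref{eq:multi_domain_weak_formulation-c}, and the splitting $W_i = (W_i - E_i\eta_i) + E_i\eta_i$ for the converse. If anything, your derivation of \eqref{eq:multi_domain_weak_formulation-a} by extending $W_i\in\bV_i^0$ by zero is slightly more careful than the paper's phrasing, which nominally asserts the identity for all $W_i\in\bV_i$ obtained by restriction; your closing observation that \eqref{eq:multi_domain_weak_formulation-c} is independent of the choice of $E_i$ is a correct bonus not recorded in the paper.
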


\begin{proof}
This proof follows the arguments from~\cite[Sec.~1.2]{quarteroni1999domain}, generalizing for the case $N>2$. First, it is shown that \eqref{eq:deformation_equation_new} yields the system \eqref{eq:multi_domain_weak_formulation}. Let $V$ be a solution to  \eqref{eq:deformation_equation}. Then setting $V_i = \restr{V}{\Delta_i}$ for $i=1, \dots, N$, one trivially obtains \eqref{eq:multi_domain_weak_formulation-b} in the sense of the corresponding traces. Moreover, using $W_i = \restr{W}{\Delta_i}$ for an arbitrary $W \in H_0^1(D,\R^d)$, one has $a_{i}(V_i,W_i) = d_{u_i} j(u)[W_i]$ 
for all $W_i\in \bV_i$, and in particular for all $W_i \in \bV_i^0$, showing \eqref{eq:multi_domain_weak_formulation-a}. Moreover, the function
\begin{equation}
\label{eq:extension-operator}
E \eta := \begin{cases}
 E_1 \eta_1  & \text{in } \Delta_1,\\
 \quad \vdots & \\
 E_N \eta_N & \text{in } \Delta_N
 \end{cases}
\end{equation}
belongs to $H_0^1(D,\R^d)$. In particular, one has
\begin{equation*}
a(V,E\eta) = dj(u)[E \eta],
\end{equation*}
which is equivalent to \eqref{eq:multi_domain_weak_formulation-c}.

Suppose now that $V_i$, $i=1, \dots, N$, are solutions to the system \eqref{eq:multi_domain_weak_formulation}. Let
\begin{equation*}
V: = \begin{cases}
V_1 & \text{in } \Delta_1,\\
\,\, \vdots & \\
V_N & \text{in } \Delta_N.
\end{cases}
\end{equation*}
From the condition $V_i = V_\ell$ on $\partial \Delta_i \cap \partial \Delta_\ell$, one obtains $V \in H_0^1(D,\R^d)$. Now, taking $W \in H_0^1(D,\R^d)$ gives $\eta:= \restr{W}{\Gamma} \in \Lambda$. Defining $E$ as in \eqref{eq:extension-operator} with $\eta_i=\restr{\eta}{\Gamma_i}$ yields $(\restr{W}{\Delta_i} - E_i \eta_i) \in \bV_i^0$ and hence \eqref{eq:multi_domain_weak_formulation-a} and \eqref{eq:multi_domain_weak_formulation-c} imply
\begin{align*}
a(V,W) &= \sum_{i=1}^N a_i(V_i, \restr{W}{\Delta_i} - E_i \eta_i) + a_i(V_i, E_i \eta_i)\\
&= \sum_{i=1}^N d_{u_i}j(u)[\restr{W}{\Delta_i} - E_i \eta_i] + d_{u_i}j(u)[E_i \eta_i]\\
&=dj(u)[W],
\end{align*}
meaning $V$ solves  \eqref{eq:deformation_equation}.
\end{proof}

\begin{remark}
There are several consequences of theorem~\ref{thm:equivalence_variational_formulations}. The first is computational: particularly for large-scale problems with many shapes, a decomposition approach can be used by solving \eqref{eq:multi_domain_weak_formulation} for an arbitrary admissible partition instead of the more expensive problem \eqref{eq:deformation_equation_new}. 
Second, for smaller-scaled problems, the theorem justifies the solving \eqref{eq:deformation_equation_new} ``all-at-once'' to obtain descent directions with respect to each shape. In particular, the solution $V_i$ to \eqref{eq:multi_domain_weak_formulation-a} gives a descent direction $-V_i$ for the shape $u_i$; due to the coercivity of $a_i$ one has 
\begin{equation*}
d_{u_i}j(u)[-V_i] = a_i(V_i, -V_i) < 0.
\end{equation*}
\end{remark}

\begin{remark}
The second and third conditions of~\eqref{eq:multi_domain_weak_formulation} are continuity conditions along $\Gamma$ for the solution $V$ and the normal flux (normal stress) relating $V_i$ for all $i=1,\ldots,N$. The extension operator $E_i$ can be chosen arbitrarily; one example is the extension-by-zero operator (cf.~\cite{hiptmair2015extension}).   
\end{remark}

Thanks to theorem \ref{thm:equivalence_variational_formulations}, the Riemannian multi-shape gradient with respect to $\mathfrak{g}^S:=\sum_{i=1}^{N}\pi_i^\ast g^S$ can be computed by solving \eqref{eq:deformation_equation} and, thus, algorithm \ref{Algo:multiples-shapes} can be applied on $(B_e^N,\mathfrak{g}^S)$.
In \eqref{Update_gS_multiple}, one can also consider a retraction mapping instead of the exponential map. If one chooses the retraction \eqref{retraction} instead of the exponential maps $\exp_{u^k_i}$ in \eqref{exp_multi}  for all $i=1,\dots,N$ in algorithm~\ref{Algo:multiples-shapes}, one gets again the relation to the perturbation of the identity. 
In this setting, theorem \ref{thm:equivalence_variational_formulations} justifies the update
\begin{equation}
\label{deformation_multiple}
D(u^{k+1}) = \{x \in D \, | \, x = x^k - t^k V^k \}
\end{equation}
with $u^{k+1}=(u^{k+1}_1,\dots, u^{k+1}_N)$ in the $k$-{th} iteration.

\begin{remark}
Notice that the variational problem given in~\eqref{eq:deformation_equation}, reflects exactly the approach presented, e.g., in~\cite{GeiersbachLoayzaWelker,SiebenbornNaegel,Siebenborn2017} to generate descent directions for problems containing multiple shapes. Hence the above theory supports the numerical approach already used in those papers.
\end{remark}

\section{Stochastic multi-shape optimization and the \\ stochastic gradient method}
\label{section:stochastic}

Given the framework for understanding shape optimization problems over product shape spaces, it is now possible to incorporate uncertainty.
In this section, the focus is on the case where the uncertainty can be characterized by a known probability space, for instance through prior sampling. The probability space is a triple $(\Omega, \mathcal{F}, \pP)$, where $\Omega$ is the sample space containing all possible ``realizations,'' $\mathcal{F} \subset 2^{\Omega}$ is the $\sigma$-algebra of events and $\pP\colon \Omega \rightarrow [0,1]$ is a probability measure. 

To account for uncertainty, it is natural to parameterize the corresponding objective, which now depends on the probability space. A parametrized shape functional is defined by a function
\begin{equation*}
J\colon \mathcal{U}^N \times \Omega \rightarrow \R, \,(u,\omega) \mapsto J(u,\omega).
\end{equation*}
Since $J$ depends on $\omega$, it is itself a random variable. To make the parameterized objective amenable to optimization, the following quantity
$$\EE[J(u,\cdot)]\colon = \int_{\Omega} J(u,\omega)\, \d \pP(\omega),$$
is used, i.e., the expectation or average. Other transformations of the parameterized objective are possible, for instance by use of disutility functions or risk functions; see \cite{Shapiro2009} for an introduction. A \textit{stochastic unconstrained shape optimization problem} is given by
\begin{equation}
\label{eq:SO-problem-abstract-2}
\min_{u \in \mathcal{U}^N} j(u):=\EE[J(u,\cdot)].
\end{equation}
Notice that the function $j$ representing the transformed function $J$ only depends on $u$, the vector of shapes. Therefore minimizers of \eqref{eq:SO-problem-abstract-2} do not depend on $\omega$, i.e., they are deterministic.

More interesting problems involve uncertainty in the equality constraint. The equality can be parametrized by the operator $e\colon \mathcal{U}^N \times \mathcal{Y} \times \Omega \rightarrow \mathcal{W}$, with Banach spaces $\mathcal{Y}$ and $\mathcal{W}$. A property is said to hold almost surely (a.s.) provided that the set in $\Omega$ where the property does not hold is a null set. Of interest are constraints of the form 
\begin{equation*}
e(u,y,\omega) = 0 \quad \text{a.s.}
\end{equation*}
In other words, $\pP(\{ \omega \in \Omega: e(u,y,\omega) \neq 0\}) = 0$. The solution 
$y=y(\omega)$ of this equation is a \textit{random state variable}. In applications, this belongs to the Bochner space $L^p(\Omega,\mathcal{Y})$, which given $p \in [1,\infty)$, is defined to be the set of all (equivalence classes of) strongly measurable functions $y\colon \Omega \rightarrow \mathcal{Y}$ having finite norm, where the norm is defined by $$\lVert y \rVert_{L^p(\Omega,\mathcal{Y})}:=(\EE[\lVert y \rVert_{\mathcal{Y}}^{p}])^{1/p} =\left(\int_\Omega \lVert y(\omega)\rVert_\mathcal{Y}^p \,\d \pP(\omega)\right)^{1/p}.$$ 
Letting the objective function depend on the state, a shape functional $\hat{J}\colon\mathcal{U}^N \times L^p(\Omega,\mathcal{Y}) \times \Omega \rightarrow \R$ is defined. With that, a \textit{constrained stochastic shape optimization problem} of the form
\begin{equation}
\label{eq:SO-problem-abstract}
\begin{aligned}
&\min_{u \in \mathcal{U}^N, y \in L^p(\Omega,\mathcal{Y})}\EE[\hat{J}(u, y(\cdot),\cdot)] \\
&\qquad\,\,\, \text{s.t.} \quad e(u,y,\omega) = 0 \quad \text{a.s.}
\end{aligned}
\end{equation}
is obtained.
If the equality constraint in \eqref{eq:SO-problem-abstract} is uniquely solvable for any choice of $u \in \mathcal{U}^N$ and almost every $\omega \in \Omega$, then the operator $S(\omega)\colon \mathcal{U}^N \rightarrow \mathcal{Y}, u \mapsto y(\omega)$ is well-defined for almost every $\omega$. As before, with $J(u,\omega):=\hat{J}(u,S(\omega)u,\omega)$, \eqref{eq:SO-problem-abstract} is formally equivalent to the problem \eqref{eq:SO-problem-abstract-2}. This unconstrained view will be helpful in formulating the stochastic gradient method. However, the reader is reminded that the stochastic gradient implicitly depends on the operator $S(\cdot)$.

If the stochastic dimension is relatively small, the expectation can be approximated using quadrature and algorithm \ref{Algo:multiples-shapes} can be applied. This type of \textit{sample average approximation} approach  is not an algorithm, and it becomes intractable as the stochastic dimension grows. For larger stochastic dimensions, the stochastic gradient method is widely used in stochastic optimization. It is a classical method developed by Robbins and Monro \cite{Robbins1951}.  As a sample-based approach, the stochastic gradient method does not suffer from the curse of dimensionality the way the discretizations mentioned in the introduction do. In~\cite{GeiersbachLoayzaWelker}, the stochastic gradient method was applied to the novel setting of shape spaces, where an example with multiple shapes was also presented. However, a theoretical background over product manifolds was not considered there.
To apply the method to the setting containing multiple shapes, several concepts developed in subsection~\ref{section:product-manifold} need to be generalized. To this end, it will sometimes be helpful to use the shorthand $J_\omega(\cdot):=J(\cdot,\omega)$.

\begin{definition}[Multi-pushforward for a fixed realization]
For each point $u\in \mathcal{U}^N$, the multi-pushforward associated with $J\colon\mathcal{U}^N \times \Omega \rightarrow \R$ for a fixed realization $\omega \in \Omega$ is given by the map
$$((J_\omega)_\ast)_u\colon T_u\mathcal{U}^N\to \R,\, \mathfrak{c}\mapsto \frac{\d}{\d t} J_\omega(\mathfrak{c}(t)) \vert_{t=0} =(J_\omega\circ \mathfrak{c})'(0).$$
\end{definition}

\begin{definition}[Stochastic Riemannian multi-shape gradient]
The Riemannian multi-shape gradient for a parametrized shape functional $J\colon \mathcal{U}^N \times \Omega \rightarrow \R$ at the point $u=(u_1,\dots,u_N)\in\mathcal{U}^N$ is given by $v=v(\omega)  \in T_u \cU^N$ satisfying
\begin{equation*}
\label{eq:shape_gradient_product_manifold_stoch}
\mathcal{G}^N_{u}\left(v,w \right) = ((J_\omega)_\ast)_uw \quad \forall \, w \in T_u \cU^N.
\end{equation*}
	\label{Def:StochMultipleShapeGradient}
\end{definition}

\noindent
Now, definition \ref{def_multi_shapeder} is generalized to incorporated uncertainties.

\begin{definition}[Multi-shape derivative for a fixed realization]
Let $D\subset\R^d$ be open, $u=(u_1,\dots,u_N)$, and observe an arbitrary admissible partition with $u_i \subset \Delta_i$ for all $i=1,\dots,N$. Further, let $k\in\mathbb{N}\cup \{\infty\}$. 
For $i=1,\dots,N$, the $i$-th partial Eulerian derivative of a shape functional  $J$ at $u$ for a fixed realization $\omega\in \Omega$ in direction $W\in\mathcal{C}^k_0(D,\mathbb{R}^d)$ is defined by
\begin{equation}
\label{eulerian_multi_partial_stoch}
d_{u_i}J(u,\omega)[\restr{W}{\Delta_i}]:=  \lim\limits_{t\to 0^+}\frac{J(u_1,\dots,u_{i-1},F_t^{\restr{W}{\Delta_i}}(u_i),u_{i+1},\dots,u_N,\omega)-J(u,\omega)}{t}
\end{equation}
If for all directions $W\in\mathcal{C}^k_0(D,\mathbb{R}^d)$ the $i$-th partial Eulerian derivative  \eqref{eulerian_multi_partial_stoch} exists and the mapping 
\begin{equation*}
\mathcal{C}^k_0(D,\mathbb{R}^d)\to \mathbb{R}, \ W\mapsto d_{u_i}J(u,\omega)[\restr{W}{\Delta_i}]
\end{equation*}
is linear and continuous, the expression $d_{u_i}J(u,\omega)[\restr{W}{\Delta_i}]$ is called the $i$-th partial shape derivative of $j$ at $u$ in direction $W\in\mathcal{C}^k_0(D,\mathbb{R}^d)$.
If the $i$-th partial shape derivatives of $J$ at $u$ for a fixed realization $\omega\in\Omega$ in the direction $W\in\mathcal{C}^k_0(D,\mathbb{R}^d)$ exist for all $i=1,...,N$, then

\begin{equation}
\label{eulerian_multi_stoch}
dJ(u,\omega)[W]:= \sum_{i=1}^{N} d_{u_i}J(u,\omega)[\restr{W}{\Delta_i}]
\end{equation}
defines the multi-shape derivative of $J$ at $u$ for a fixed realization $\O\omega\in \Omega$ in direction $W\in\mathcal{C}^k_0(D,\mathbb{R}^d)$.
\label{def_multi_shapeder_stoch}
\end{definition}

Using identical arguments to those in \cite[Lemma 2.14]{GeiersbachLoayzaWelker}, it is possible to show under what conditions  $j$ is shape differentiable in $u$. 
\begin{lemma}
\label{lemma:differentiability-expectation}
Suppose that $J(\cdot,\omega)$ is shape differentiable in $u$ for almost every $\omega \in \Omega$. Assume there exists a $\tau > 0$ and a $\PP$-integrable real function $C\colon \Omega \rightarrow \R$ such that for all $t \in [0,\tau]$, all $W \in C_0^\infty(D,\R^d)$, all $i = 1,\ldots, N$, and almost every $\omega$,
\begin{equation*}
\label{eq:difference-quotient-random-shape-derivative}
\frac{J(u_1,\dots,u_{i-1},F_t^{\restr{W}{\Delta_i}}(u_i),u_{i+1},\dots,u_N,\omega)-J(u,\omega)}{t} \leq C(\omega).
\end{equation*}
Then $j$ is shape differentiable in $u$ and 
\begin{equation*}
\label{eq:differentiability-expectation-exchange}
dj(u)[W] = \EE[d J(u,\cdot)[W]] \quad \forall W\in C_0^\infty(D,\R^d).
\end{equation*}
\end{lemma}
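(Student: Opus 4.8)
The plan is to realize each partial shape derivative $d_{u_i}j(u)[\restr{W}{\Delta_i}]$ as a limit of difference quotients, to rewrite that quotient of $j$ as an expectation of the corresponding quotient of $J$, and then to exchange the limit $t\to0^+$ with the expectation via Lebesgue's dominated convergence theorem. Concretely, fix a direction $W\in C_0^\infty(D,\R^d)$ and an index $i$, and set
$$q_t^i(\omega):=\frac{J(u_1,\dots,u_{i-1},F_t^{\restr{W}{\Delta_i}}(u_i),u_{i+1},\dots,u_N,\omega)-J(u,\omega)}{t}.$$
Since $j(u)=\EE[J(u,\cdot)]=\int_\Omega J(u,\omega)\,\d\PP(\omega)$, linearity of the integral gives
$$\frac{j(u_1,\dots,F_t^{\restr{W}{\Delta_i}}(u_i),\dots,u_N)-j(u)}{t}=\int_\Omega q_t^i(\omega)\,\d\PP(\omega),$$
so that the $i$-th partial Eulerian derivative of $j$ is precisely $\lim_{t\to0^+}\int_\Omega q_t^i\,\d\PP$, provided this limit exists.

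The two ingredients needed for dominated convergence are pointwise convergence and an integrable majorant. Pointwise convergence is immediate: because $J(\cdot,\omega)$ is shape differentiable for almost every $\omega$, one has $q_t^i(\omega)\to d_{u_i}J(u,\omega)[\restr{W}{\Delta_i}]$ as $t\to0^+$ for a.e. $\omega$. The majorant is supplied by the standing assumption, which bounds $q_t^i(\omega)$ by the $\PP$-integrable function $C(\omega)$ for all $t\in(0,\tau]$, uniformly in $i$. To dominate the \emph{modulus} $|q_t^i(\omega)|$ — as dominated convergence requires — I would use the assumed upper bound together with its counterpart in the direction $-W$, recalling $F_t^{-W}=F_{-t}^{W}$, to obtain a two-sided integrable control on $(0,\tau]$. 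Securing this two-sided domination is the genuine crux of the argument; once it is in place, dominated convergence applies along any sequence $t_n\downarrow0$ and yields
$$d_{u_i}j(u)[\restr{W}{\Delta_i}]=\EE\!\left[d_{u_i}J(u,\cdot)[\restr{W}{\Delta_i}]\right].$$

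Summing this identity over $i=1,\dots,N$ and invoking the definition of the multi-shape derivative in \eqref{eulerian_multi} then gives $dj(u)[W]=\EE[dJ(u,\cdot)[W]]$, as claimed. It remains to verify that $W\mapsto dj(u)[W]$ is linear and continuous, so that $j$ is genuinely shape differentiable in the sense of Definition \ref{def_multi_shapeder}. Linearity is inherited directly, since for a.e. $\omega$ the map $W\mapsto dJ(u,\omega)[W]$ is linear (being the shape derivative of $J(\cdot,\omega)$) and the expectation is linear. Continuity is inherited analogously from the a.e. continuity of $W\mapsto dJ(u,\omega)[W]$ together with the integrable bound $C$, passing to the limit under the expectation once more by dominated convergence. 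This mirrors the reasoning of \cite[Lemma~2.14]{GeiersbachLoayzaWelker}, the only additional bookkeeping being the sum over the $N$ shapes and the partition-dependence of the partial quotients, both controlled by the admissibility of the partition.
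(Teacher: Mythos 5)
The paper offers no written proof of this lemma at all; it defers entirely to \cite[Lemma 2.14]{GeiersbachLoayzaWelker}, whose argument is exactly the dominated-convergence scheme you describe: express the difference quotient of $j$ as the expectation of the corresponding difference quotient of $J$, pass to the limit $t\to 0^+$ under the integral, and sum the resulting identities over $i$ using \eqref{eulerian_multi}. So your overall route is the intended one, and the bookkeeping over the $N$ shapes is handled correctly.

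The step you yourself flag as the crux --- extracting two-sided integrable domination of $q_t^i(\omega)$ from the stated one-sided bound by invoking the direction $-W$ --- does not work as written. Applying the hypothesis to $-W$ bounds $\bigl(J(\dots,F_t^{-\restr{W}{\Delta_i}}(u_i),\dots,\omega)-J(u,\omega)\bigr)/t$ from above; since $F_t^{-W}=F_{-t}^{W}$, this is (up to sign) the difference quotient of $s\mapsto J(\dots,F_s^{\restr{W}{\Delta_i}}(u_i),\dots,\omega)$ at the \emph{backward} parameter $s=-t$, which for finite $t>0$ bears no algebraic relation to $-q_t^i(\omega)$; the identity $dJ(u,\omega)[-W]=-dJ(u,\omega)[W]$ holds only in the limit, by linearity of the shape derivative. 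Hence no lower bound on $q_t^i$ is obtained, and a one-sided bound alone is genuinely insufficient for the exchange: reverse Fatou yields only $\limsup_{t\to 0^+}\int_\Omega q_t^i\,\d\PP \le \EE\bigl[d_{u_i}J(u,\cdot)[\restr{W}{\Delta_i}]\bigr]$, and the example $q_t(\omega)=-t^{-1}\mathbbm{1}_{(0,t)}(\omega)$ on $\Omega=(0,1)$ shows the matching liminf inequality can fail. The resolution is that the domination condition is meant to control the \emph{modulus} of the difference quotient (this is how the corresponding hypothesis functions in the cited lemma); with $|q_t^i(\omega)|\le C(\omega)$ the remainder of your argument goes through verbatim. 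A smaller caveat on your last paragraph: a bound $\le C(\omega)$ holding uniformly over all $W$ would, by the homogeneity $W\mapsto\lambda W$, force the shape derivative to vanish, so $C$ must in practice depend on $W$; consequently continuity of $W\mapsto dj(u)[W]$ does not follow from the domination alone and must be argued separately, e.g.\ from the explicit volume expression of the derivative.
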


Equipped with these tools, it is now possible to formulate the stochastic gradient method for objectives formulated on a product shape space in algorithm \ref{Algo:stochastic_descent}. Instead of a backtracking procedure as in algorithm \ref{Algo:multiples-shapes} to determine the step-size, the algorithm uses the classical ``Robbins--Monro'' step-size from the original work \cite{Robbins1951}:
\begin{equation}
 \label{eq:Robbins-Monro-step-sizes}
 t^k \geq 0, \quad \sum_{k=0}^\infty t^k = \infty, \quad \sum_{k=0}^\infty (t^k)^2 < \infty.
\end{equation}
Under additional assumptions on the manifold and function $J$ (cf.~\cite{GeiersbachLoayzaWelker}), this rule guarantees step-sizes that are large enough to converge to stationary points while asymptotically dampening oscillations in the iterates. In contrast to the backtracking procedure, the step-size sequence is in practice chosen exogenously and its scaling is either informed by a priori estimates or tuned offline.
 
\begin{algorithm}
		\caption{Stochastic gradient method on $(\mathcal{U}^N,\mathcal{G}^N)$ with Robbins--Monro step-size}
	\label{Algo:stochastic_descent}
		\begin{algorithmic}[0]
			\STATE \textbf{Require:} Objective function $J$ on $(\mathcal{U}^N,\mathcal{G}^N)$
			\vspace{.1cm}
			\STATE \textbf{Input:} Initial shape $u^0=(u^0_1,\dots,u^0_N)\in \mathcal{U}^N$ 
			\vspace{.3cm}
			
			\STATE \textbf{for} $k=0,1,\dots$ \textbf{do}
			\vspace{.1cm}
			\STATE[1] Randomly sample $\omega^k$, independent of $\omega^1, \dots, \omega^{k-1}$
			\STATE [2] Compute the stochastic Riemannian multi-shape gradient $v^k = v^k(\omega^k)$ w.r.t. $\mathcal{G}^N$ by solving
	\begin{equation*}
\label{CompRiemShapeGradient_multi_stoch}
((J_{\omega^k})_\ast)_{u^k}w=G_{u^k}(v^k,w)\quad \forall\, w\in T_{u^k}\mathcal{U}^N.
\end{equation*}
			\vspace{.1cm}
			\STATE [3] Set 
			\begin{equation*}
			\label{Update_gS_stochastic}
			u^{k+1}:= \operatorname{exp}^N_{u^k}(-t^k v^k))
			\end{equation*}
			\\\phantom{[3]} for a steplength $t^k$ satisfying \eqref{eq:Robbins-Monro-step-sizes}.
			\vspace{.1cm}
			\STATE \textbf{end for}
			\vspace{.3cm}
		\end{algorithmic}
\end{algorithm}

In algorithm~\ref{Algo:stochastic_descent}, a new random realization $\omega^k$ is generated at each iteration $k$. This is used to compute a stochastic gradient $v^k = v^k(\omega^k)$, which is then used as a descent direction for the objective functional $J(\cdot,\omega^k)$.  If $\omega^k$ comprises a single sample from the probability space, the computation of the descent direction $v^k$ is as cheap as in the deterministic case. Note that this is \textit{not} necessarily a descent direction for the ``true'' objective $j$, which in combination with the exogeneous step-size rule $t^k$ does not guarantee descent at each iteration. The exponential map is used to map back to manifold; see figure~\ref{fig:exponential-stochastic}.

\begin{figure}
	\begin{center}
	\begin{overpic}[width=.63\textwidth]{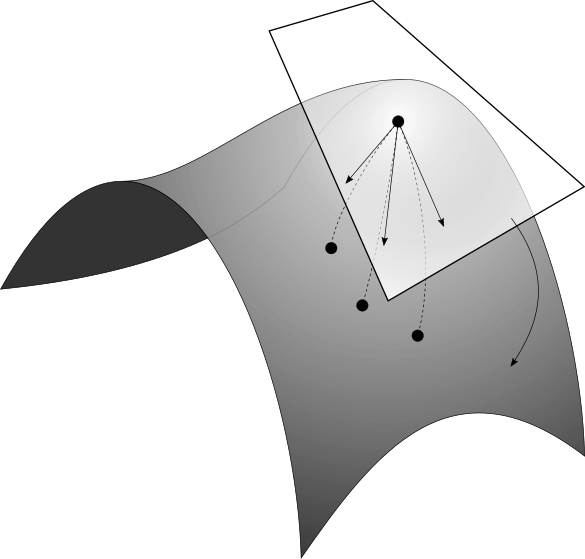}
	\put (53,88) {$T_{u^k}\mathcal{U}^N$}
	\put (70,77) {$u^k$}
	\put (73,66) {$-t^kv^k(\omega^{k,i})$}
	\put (60,35) {$u^{k+1}$}
	\put (82,44) {$\exp^N_{u^k}$}
	\put (52,15) {$\mathcal{U}^N$}
	\end{overpic}
	\end{center}
	\caption{Random iterates $u^{k+1}= \exp^N_{u^k}(-t^kv^k(\omega^{k,i}))$, where $\exp^N_{u^k}\colon T_{u^k}\mathcal{U}^N\to \mathcal{U}^N$.}
	\label{fig:exponential-stochastic}
\end{figure}

Some comments on possible improvements to the simple algorithm \ref{Algo:stochastic_descent} in the context of shape spaces are in order. One might ask whether a backtracking procedure could also be used for the stochastic setting; however, in \cite{Geiersbach2020}, it was demonstrated how the Armijo backtracking rule when combined with stochastic gradients fails in minimizing a function over the real line. Of course, there are modifications possible. In the most basic version of the method, $\omega^k$ comprises a single sample randomly drawn from the probability space. One might think that the problem could be remedied by simply taking multiple samples $\omega^k = (\omega^{k,1} , \dots, \omega^{k,m_k})$ at each iteration $k$ and computing the empirical average
\begin{equation}
\label{eq:empirical-stochastic-gradient}
\nabla J(u^k,\omega^k) = \frac{1}{m_k} \sum_{i=1}^{m_k} \nabla J(u^k, \omega^{k,i}).
\end{equation}
If $m_k$ is constant, then it is however easy to modify the example from \cite{Geiersbach2020} to show that simply taking more samples does not guarantee convergence of the method when paired with an Armijo backtracking procedure.  Asymptotic convergence results are known if one is ready to take $m_k \rightarrow \infty$, see \cite{Shapiro1996,Wardi1990}. 

Nevertheless, taking batches of samples like \eqref{eq:empirical-stochastic-gradient} is a simple way to reduce the variance of the gradient, and with that the iteration $u^{k+1}$. How the sampling sequence $\{m_k\}$ is to be chosen  strongly depends on the structure of the problem \eqref{eq:SO-problem-abstract} and the computational cost at each iteration $n$. In the context of optimal control problems with partial differential equations as constraints, one might additionally take into account that the computation is subject to numerical error as well. The authors in \cite{Martin2019} proposed a stochastic gradient step combined with a multilevel Monte Carlo scheme to reduce variance and numerical error. A method such as this one is sometimes referred to as a stochastic quasigradient method in the literature to emphasize the numerical bias induced by the iteration. The analysis in \cite{Martin2019}, which gives efficient choices for the sample size $m_k$, step-size $t_k$, and discretization error tolerance, works because the original problem is strongly convex, problem parameters are well-known, and the meshes involved are not deformed as part of the outer optimization loop. For more challenging problems, these choices no longer apply and future analysis would be needed.

Again for optimal control problems with PDEs, but for a larger class of problems, including nonsmooth and convex problems, the authors \cite{Geiersbach2020b} propose a different approximation scheme without needing to take additional samples (meaning $m_k \equiv 1$ is permissible). The proposed method uses averaging of the \textit{iterate} $u^k$ instead of the stochastic gradient. The descent is smoothed indirectly without having to take additional samples at each iteration. This was shown to work efficiently in combination with a mesh refinement rule, carefully coupled with the step-size rule $t^k$. Extending these results to the context of shape optimization would also be challenging as well, not only due to the analysis of numerical error and lack of convexity; here, $u^k$ represents a shape, not an element from a Banach space, and its ``average'' would need to be made precise. 

A final connection to the shape space $(B_e^N,\mathfrak{g}^S)$ is now desirable in view of the following numerical experiments. Using the theoretical justification from theorem~\ref{thm:equivalence_variational_formulations}, it is possible to compute a deformation vector $V = V(\omega) \in H_0^1(D,\R^d)$ in the point $u =(u_1, \dots, u_N) \in B_e^N$ by solving the variational problem
\begin{equation}
a(V,W) = dJ(u,\omega)[W] \quad \forall W \in H_0^1(D,\R^d).
\end{equation}
This deformation vector can be seen as an extension of the stochastic gradient $v=v(\omega)$ to the hold-all domain $D$. This stochastic deformation vector can then be used in the expression \eqref{deformation_multiple}.

\section{Numerical investigations}
\label{sec:numerical_experiments}

In this section, the shape optimization model is formulated in order to demonstrate the algorithms. The deterministic model is given in subsection~\ref{Subsec:ModelProblem}. Here, the focus is on a stationary version of the multi-shape model introduced in \cite{Siebenborn2017}. 
For the stochastic example in subsection~\ref{Subsec:ModelProblem-Stoch}, the model from \cite{GeiersbachLoayzaWelker} is used, with adjustments to include multiple shapes and random fields. 
There are several motivations for the models, for instance the identification of cellular structures in biology \cite{Siebenborn2017} or electrical impedance tomography 
\cite{dambrine2019incorporating}. In subsection~\ref{subsection:Numerics}, the results of the experiments are shown. In particular, the effectiveness and performance of algorithm~\ref{Algo:multiples-shapes} and algorithm~\ref{Algo:stochastic_descent} are demonstrated.
Moreover, an experiment on a single shape is done, which shows the robustness of a stochastic solution.

\subsection{Deterministic model problem}
\label{Subsec:ModelProblem}

Consider a partition of the domain $D$ into $N+1$ disjoint subdomains $D_i\subset D$ in such a way that  $(\sqcup_{i=0}^N D_i) \sqcup (\sqcup_{i=1}^Nu_i) = D$, where $u_i=\partial D_i$, $i=1, \dots, N$ and $\sqcup$ denotes the disjoint union. In particular, $D$ depends on $u$, i.e., $D=D(u)$. Note that this partition is a new construction that is related to the physical model, and is not to be confused with the arbitrary partition constructed in section~\ref{section:product-manifold}. For a given function $f\colon D \rightarrow\R$, $f_i$ denotes the restriction $\restr{f}{D_i}\colon D_i \rightarrow \R$. Additionally, $\mathbbm{1}_{D_i}$ denotes the indicator function of the set $D_i$, meaning $\mathbbm{1}_{D_i}(x) = 1$ if $x \in D_i$ and $\mathbbm{1}_{D_i}(x) = 0$ if $x \not\in D_i$.

Let $\bar{y} \in H^1(D)$ be the target distribution and $g \in L^2(\partial D)$ be a source term. The permeability coefficient is defined on each subdomain $D_i$ by $\kappa_i \in C^1(D_i).$ The shorthand $\kappa := \sum_{i=0}^N \kappa_i  \mathbbm{1}_{D_i}$ will be useful in representing this function in the weak form. 

In the following, the objective function
\begin{equation*}
j(u):= j^\text{obj}(u)+ j^\text{reg}(u)
\end{equation*}
with 
\begin{align}
\label{eq:tracking}
j^\text{obj}(u)& := \frac{1}{2} \int_{D}(y(x)-\bar{y}(x))^2  \dx = \frac{1}{2} \sum_{i=0}^N \int_{D_i}(y_i(x)-\bar{y}_i(x))^2  \dx,\\
\label{eq:regularization}
j^\text{reg}(u)& :=  \sum_{i=1}^N \nu_i \int_{u_i}  \d S
\end{align} 
is considered. The tracking-type functional \eqref{eq:tracking} gives the distance in $L^2(D)$ between the function $y$ and the target $\bar{y}$. 
In \eqref{eq:regularization}, $ \d S$ is used to characterize a surface integral. Note that the functional \eqref{eq:regularization} regularizes the perimeter with respect to each shape and different choices for  $\nu_i \geq 0$ can be made.

The following PDE-constrained problem in strong form is given:
\begin{align}
\min_{u \in B_e^N} \quad j(u) \hspace{2cm} & \label{Obj} \\
\text{s.t.} \quad - \nabla \cdot (\kappa_i(x) \nabla y_i(x)) &=  0\quad \text{in } D_i, \quad i=0,\dots,N,  \label{eq:PDE1d} \\
\kappa_0(x)\frac{\partial y_0}{\partial \text{n}_0}(x) &= g(x) \quad \text{on } \partial D, \, \label{eq:PDE2d}
\end{align}
where $\text{n}_0$ represents the outward normal vector on $D_0$. The equations \eqref{eq:PDE1d}--\eqref{eq:PDE2d} are complemented by the transmission conditions 
\begin{equation}\label{eq:jumpconditionsd}
\kappa_i(x) \frac{\partial y_i}{\partial \text{n}_i}(x) + \kappa_0(x) \frac{\partial y_0}{\partial \text{n}_0}(x)  = 0, \quad y_i(x) - y_0(x)  = 0 \quad  \text{on } u_i , \quad i=1, \dots, N.
\end{equation}
Note that the system \eqref{eq:PDE1d}--\eqref{eq:jumpconditionsd} can be compactly represented in the weak formulation: find $y \in H_{\text{av}}^1(D) := \{  v \in H^1(D) | \int_D v \,\dx = 0\}$ such that
\begin{equation*}
\int_D \kappa(x) \nabla y(x) \cdot \nabla v(x) \d x = \int_{\partial D} g(x) v(x) \d x \quad \forall v \in H_{\text{av}}^1(D).
\end{equation*}

\begin{remark}
Thanks to \cite[Proposition 3.1]{Ito-Kunisch-Peichl}, the regularity of $y_i$, $i=0,\dots,N$, is better than the one of $y$. More precisely, the solution $y\in H_{\text{av}}^1(D)$ of \eqref{eq:PDE1d}--\eqref{eq:jumpconditionsd} satisfies $y_i\in H^2(D_i)$, $i=0,\dots, N$. 
\end{remark}

\begin{remark}
In general, the distribution $\bar{y}$ and the diffusion coefficient $\kappa$  do not need to have as high a regularity as assumed above to formulate the PDE-constrained problem \eqref{Obj}--\eqref{eq:jumpconditionsd}. The regularity above is only needed for shape differentiability of the objective functional, see \cite[Section 3.2]{Ito-Kunisch-Peichl}. 
\end{remark}

The shape derivative to \eqref{Obj}--\eqref{eq:jumpconditionsd} can be achieved using standard calculation techniques like the one mentioned in subsection~\ref{section:DetOpt} combined with the help of the partial shape derivative definition and remark~\ref{rem:Hadamard-Thm-multiple}.
Its volume formulation is given by
\begin{equation}
\label{sd_j1d}
\begin{split}
d j(u)[W]=& 
\int_D 
-\kappa(x)\nabla y(x) \cdot (\nabla W(x)+\nabla W^\top(x))\nabla p (x)
\\
& \hspace{.4cm}- (y(x)-\bar{y}(x))\nabla\bar{y}(x) \cdot W(x) + (\nabla \kappa(x) \cdot W(x))\nabla y(x) \cdot \nabla p(x)
\\
&\hspace{.4cm} + \text{div}(W(x))\left( \frac{1}{2} (y(x)-\bar{y}(x))^2 + \kappa(x)\nabla y(x) \cdot \nabla p(x) \right)\,\dx\\
&+\sum_{i=1}^N \nu_i \int_{u_i} \mathfrak{v}_i(x) W(x) \cdot \text{n}_i(x)\,\, \d S,
\end{split}
\end{equation}
where $ \mathfrak{v}_i$ and denotes the curvature of the shape $u_i$,  $i=1,\dots,N$,  $ y(x)$ satisfies the \emph{state equation}~\eqref{eq:PDE1d}--\eqref{eq:jumpconditionsd} and $p(x)$ satisfies \emph{adjoint equation} given in strong form by
\begin{align}
- \nabla \cdot (\kappa_i(x) \nabla p_i(x)) &=  \bar{y}(x)-y_i(x)\quad \text{in } D_i, \quad i=0, \dots, N,  \label{eq:PDE1ad} \\
\kappa_0(x)\frac{\partial p_0}{\partial \text{n}_0}(x)& = 0 \quad \text{on } \partial D \, \label{eq:PDE2ad}
\end{align}
with the corresponding transmission conditions
\begin{equation}
\label{jumpdet}
\kappa_i(x) \frac{\partial p_i}{\partial \text{n}_i}(x) +\kappa_0(x) \frac{\partial p_0}{\partial \text{n}_0}(x)  = 0, \quad p_i(x)-p_0(x)  = 0\quad \text{on } u_i, \quad i=1, \dots, N.
\end{equation} 
The sum of integrals over $u_i$ in \eqref{sd_j1d}  is the shape derivative of the perimeter regularization, which is computed with the help of the partial shape derivative definition as follows:
$$
dj^\text{reg}(u)[W]=\frac{\d^+}{\d t}\,\rule[-2.5mm]{.1mm}{6mm}_{\hspace{.5mm}t=0}\, \sum_{i=1}^N v_i \int_{F_t^{\restr{W}{\Delta_i}}(u_i)}\d S ,
$$
where the $\ell$-th partial shape derivative of $j^\text{reg}$ at $u$ in direction $W$ is given by 
\begin{align*}
d_{u_\ell}j^\text{reg}(u)[\restr{W}{\Delta_\ell}]&=\frac{\d^+}{\d t}\,\rule[-2.5mm]{.1mm}{6mm}_{\hspace{.5mm}t=0}\, \left(\underset{i\not=\ell}{\sum_{i=1}^N} v_i\, \int_{u_i}\d S\right) +v_\ell\, \frac{\d^+}{\d t}\,\rule[-2.5mm]{.1mm}{6mm}_{\hspace{.5mm}t=0}\, \int_{F_t^{\restr{W}{\Delta_\ell}}(u_\ell)}\d S \\
& = v_j\, \frac{\d^+}{\d t}\,\rule[-2.5mm]{.1mm}{6mm}_{\hspace{.5mm}t=0}\, \int_{F_t^{\restr{W}{\Delta_\ell}}(u_\ell)}\d S =\int_{u_\ell} \mathfrak{v}_\ell(x) \restr{W}{\Delta_\ell}(x) \cdot \text{n}_\ell(x)\,\, \d S,
\end{align*}
where the last equality holds thanks to \cite[Proposition 5.1]{novruzi2002structure}. This gives the $\ell$-th partial shape derivative
$d_{u_\ell}j^\text{reg}(u)[\restr{W}{\Delta_\ell}]$ and thus the shape derivative of the regularization term in \eqref{sd_j1d}.

\smallskip
Now, every object needed for the application of algorithm~\ref{Algo:multiples-shapes} is given. In subsection~\ref{subsection:Numerics},  this algorithm is applied to solve the deterministic model problem.

\subsection{Stochastic model problem}
\label{Subsec:ModelProblem-Stoch}
For the stochastic model, the domain $D$ is partitioned as described for the deterministic model above. For a function $f\colon D \times \Omega \rightarrow \R$ the function $f_i$ denotes the restriction $f|_{D_i}\colon  D_i \times \Omega \rightarrow \R$. The slightly abusive notation $\nabla f_i(x,\omega) = \nabla_x f_i(x,\omega)$ means  $\omega$ is fixed and the gradient is to be understood with respect to the variable $x$ only. Additionally, the notation for the directional derivative means $\frac{\partial f_i}{\partial \text{n}_i}(x,\omega) = \lim_{t \rightarrow 0} \tfrac{1}{t}(f_i(x+t\operatorname{n}_i(x),\omega) - f_i(x,\omega)).$ 
A parametrized objective function is now given by
\begin{equation*}
\label{objective}
J(u,\omega):=J^\text{obj}(u, \omega)+ J^\text{reg}(u),
\end{equation*}
where 
\begin{align}
\label{Objective_TrackingType}
J^\text{obj}(u, \omega)  :=\frac{1}{2} \int_D (y(x,\omega)  - \bar{y}(x))^2 \, \dx =\frac{1}{2} \sum_{i=0}^N \int_{D_i}(y_i(x,\omega)-\bar{y}_i(x))^2  \dx
\end{align}
and $J^\text{reg}$ is defined as in \eqref{eq:regularization}. For simplicity, the source term $g$ and the target term $\bar{y}$ are deterministic with the same regularity as in the previous section. Suppose however that the source of uncertainty comes from the coefficients, i.e., $\kappa_i = \kappa_i(x,\omega)$ are random fields with regularity $\kappa_i \in L^2(\Omega, C^1(D_i))$. This leads to a modification of the deterministic problem:
\begin{align}
\min_{u \in B_e^N} \quad \left\lbrace j(u):=\EE \big[ J(u,\omega) \big] \right\rbrace \hspace{.3cm}&\label{eq:problem}\\
\text{s.t.} \quad 
- \nabla \cdot (\kappa_i(x,\omega) \nabla y_i(x,\omega)) &=  0\quad \text{in } D_i \times \Omega, \quad i=0, \dots, N, \label{eq:PDE1} \\
 \kappa_0(x,\omega) \frac{\partial y_0}{\partial \text{n}_0}(x,\omega) &= g(x) \quad \text{on } \partial D \times \Omega \label{eq:PDE2}
\end{align}
The following transmission conditions are also imposed: 
\begin{equation}\label{eq:jumpconditions}
\begin{aligned}
 \kappa_i(x,\omega) \frac{\partial y_i}{\partial \text{n}_i}(x,\omega)+ \kappa_0(x,\omega) \frac{\partial y_0}{\partial \text{n}_0}(x,\omega)&= 0\quad &\text{ on } u_i \times \Omega,\, i=1, \dots, N, \\
  y_i(x,\omega) - y_0(x,\omega) &= 0   \quad &\text{ on } u_i \times \Omega,\, i=1, \dots, N.
 \end{aligned}
\end{equation}

Using standard techniques for calculating the shape derivative (see~\cite[Appendix B]{GeiersbachLoayzaWelker}), the shape derivative in volume formulation for a fixed $\omega$ is given by 
\begin{equation*}
\label{sd_j1}
\begin{aligned}
d J&(u,\omega)[W]\\=& \int_D -\kappa(x,\omega)\nabla y(x,\omega) \cdot (\nabla W(x)+\nabla W^\top(x))\nabla p(x,\omega)  \\
& \hspace{.4cm} - (y(x,\omega)-\bar{y}(x))\nabla\bar{y}(x) \cdot  W(x)+ (\nabla \kappa(x,\omega) \cdot W(x))\nabla y(x,\omega) \cdot \nabla p(x,\omega)
\\
&\hspace{.4cm} + \text{div}(W(x))\left( \frac{1}{2} (y(x,\omega)-\bar{y}(x))^2 + \kappa(x,\omega)\nabla y(x,\omega)\cdot \nabla p(x,\omega) \right)\,\dx\\
&+ \sum_{i=1}^N \nu_i \int_{u_i} \mathfrak{v}_i(x) W(x) \cdot \text{n}_i(x)\, \d S,
\end{aligned}
\end{equation*}
where $y = y(x,\omega)$ satisfies the \emph{state equation}~\eqref{eq:PDE1}--\eqref{eq:jumpconditions} and $p = p(x,\omega)$ satisfies \emph{adjoint equation} 
\begin{align}
-   \nabla \cdot (\kappa_i(x,\omega) \nabla p_i(x,\omega)) &=  \bar{y}(x) -y_i(x,\omega),\quad \text{in }D_i \times \Omega, \quad i=0, \dots, N, \label{eq:adjointPDE1} \\
\kappa_0(x,\omega) \frac{\partial p_0}{\partial \text{n}_0}(x,\omega)  &= 0, \quad \quad\text{on } \partial D \times \Omega, \label{eq:adjointPDE2}
\end{align}
with corresponding interface conditions
\begin{equation}\label{eq:jumpconditions-adjoint}
\begin{aligned}
\kappa_i(x,\omega) \frac{\partial p_i}{\partial \text{n}_i}(x,\omega) + \kappa_0(x,\omega) \frac{\partial p_0}{\partial \text{n}_0}(x,\omega)&=0\quad & \text{ on } u_i \times \Omega, i=1, \dots, N, \\
 p_i(x,\omega) - p_0(x,\omega) &= 0 \quad & \text{ on } u_i \times \Omega, i=1, \dots, N.
\end{aligned}
\end{equation}

The construction of the coefficients $\kappa$ for the purpose of simulations requires some discussion. Karhunen--Lo\`eve expansions are frequently used to simulation random perturbations of a coefficient within a material and are also used in the experiments in subsection~\ref{subsection:Numerics}. Given a domain $\tilde{D}$, a (truncated) Karhunen--Lo\`eve expansion of a random field $a\colon \tilde{D} \times \Omega \rightarrow\R$ takes the form
\begin{equation*}
a(x,\omega) = \bar{a}(x) +  \sum_{k=1}^{m} \sqrt{\gamma_{k}} \phi_{k}(x) \xi_{k}(\omega),
\end{equation*}
where $\bar{a}\colon \tilde{D} \rightarrow \R$ and $\xi(\omega)=(\xi_{1}(\omega), \dots, \xi_{m}(\omega)) \in \R^{m}$ is a random vector. 
The truncation is done for the purposes of numerical simulation and the choice of $m$ should be informed by error analysis. The terms $\gamma_k$ and $\phi_k$ are eigenvalues and eigenfunctions that depend on the domain $\tilde{D}$. In particular, they are associated with the compact self-adjoint operator defined via the covariance function $C \in L^2(\tilde{D}\times \tilde{D})$ by $\mathcal{C}(\phi)(x) = \int_{\tilde{D}} C(x,y) \phi(y) \textup{d} y$ for all $x\in \tilde{D}.$ For general domains, formulas giving explicit representations of $\gamma_k$ and $\phi_k$ do not exist and need to be numerically computed. However, since the subdomains vary as part of the optimization procedure, their computation here would be extremely expensive. Moreover, from a modeling perspective, it seems more realistic that the model for uncertainty in a specific material is constructed beforehand using samples on a fixed domain $\tilde{D} \supset D_i$.
 Ideally $\tilde{D}$ should be much larger than $D_i$ to limit the effects of the boundary of the larger domain on the sample. Then, to approximate $\kappa_i$ on $D_i$, one can first produce a sample on the larger domain $\tilde{D}$ and then use its restriction on the domain $D_i$ for computations. To be more precise, one would first define over $\tilde{D}$
\begin{equation}
\label{eq:KL-expansion}
 \tilde{\kappa}_{i}(x,\omega) =\bar{\kappa}_{i}(x) + \sum_{k=1}^{m_i} \sqrt{\gamma_{i,k}} \phi_{i,k}(x) \xi_{i,k}(\omega),
\end{equation}
where $\bar{\kappa}: \tilde{D} \rightarrow \R$, $\xi_{i,k}(\omega)=(\xi_{i,1}(\omega), \dots, \xi_{i,m_{i}}(\omega)) \in \R^{m_i}$ is a random vector, and $\gamma_{i,k}$ and $\phi_{i,k}$ denote the eigenvalues and eigenfunctions that depend on the domain $\tilde{D}$. 
Finally, $\kappa_i = \tilde{\kappa}_i |_{D_i}$. The coefficient $\kappa$ over the domain $D$ is then stitched together by definition of
\begin{equation*}\label{eq:randomfield}
\kappa(x,\omega)= \kappa_0(x,\omega)+\sum_{i=1}^N \kappa_{i}(x,\omega)\mathbbm{1}_{D_i}(x).
\end{equation*}
An example of this construction is shown in the next subsection in figure~\ref{fig:random-field-realizations}.

\subsection{Numerical experiments}
\label{subsection:Numerics}
The purpose of this section is to demonstrate the behavior and performance of 
algorithm~\ref{Algo:multiples-shapes} and algorithm~\ref{Algo:stochastic_descent}. 
Simulations were run on FEniCS \cite{Alnes2015}.
For all experiments, the hold-all domain is set to $D=[0,1]^2$ and a mesh with 2183 nodes and 4508 elements is used.

For methods relying on mesh deformation, one challenge is to ensure that meshes maintain good quality and do not become destroyed over the course of optimization. 
Many techniques have been developed along the years to overcome this challenge. 
There is the option of remeshing, see for instance~\cite{MorinNochettoPaulettiVerani:2012:1,Sturm:2016:1,FepponAllaireBordeuCortialDapogny:2019:1}. 
Of course, one could also use mesh regularization techniques, space adaptivity, among others as described for example in~\cite{BaenschMorinNochetto:2005:1,DoganMorinNochettoVerani:2007:1}. 
There is also the possibility of projecting the descent directions onto the subspace of perturbation fields generated only by normal forces, inspired by the Hardamard structure theorem~\cite{EtlingHerzogLoayzaWachsmuth:2020:1}.
Recently, a simultaneous shape and mesh quality optimization approach based on pre-shape calculus has also been proposed~\cite{LuftSchulz:2020:1, LuftSchulz:2021:1}.
Another option is to consider the method of mappings and impose certain restrictions on the maps that preserve mesh quality, see~\cite{HaubnerSiebenbornUlbrich:2021:1, OnyshkevychSiebenborn:2021:1}.

In this chapter, the techniques developed in~\cite{SchulzSiebenbornWelker2015:2,SchulzSiebenborn2016}, are considered. 
As discussed in ~\cite{SchulzSiebenbornWelker2015:2}, an unmodified right-hand side of the discretized deformation equation leads to deformation fields causing meshes with bad aspect ratios. One possibility is to set the values of the shape derivative to zero if the corresponding element does not intersect with the shapes, i.e.,
$$dj(u)[W]=0\quad \forall W\text{ with }\text{supp}(W)\cap u_i=\emptyset, \quad i=1, \dots, N.$$
Additionally, following the ideas from \cite{SchulzSiebenborn2016}, at each iteration $k$, an additional PDE is solved to choose values for the Lam\'{e} parameters in the deformation equation. The parameter $\lambda$ is set to zero, and $\mu$ is chosen from the interval $[\mu_{\min},\mu_{\max}]$ such that it is decreasing smoothly from $u_i$, $i=1, \dots, N$, to the outer boundary $\partial D$. 
One possible way to model this behavior is to solve the Poisson equation
  \begin{align*}
  \nonumber\Delta \mu  &= 0\hspace{.9cm}   \text{ in } D_{i}, \quad i=0, \dots, N \\
  \label{eq:system_mu_elas}\mu &= \mu_{\max}  \hspace{0.4cm}\text{ on } u_i, \quad i=1, \dots, N,\\
  \nonumber\mu &= \mu_{\min} \hspace{0.5cm} \text{ on } \partial D.
  \end{align*} 
In all experiments, $\mu_{\min} = 10$ and $\mu_{\max} = 25$ is chosen.

\subsubsection{Deterministic case: behavior of algorithm \ref{Algo:multiples-shapes}}
\label{subsection:Num_det}


The deterministic shape optimization problem formulated in subsection~\ref{Subsec:ModelProblem} is considered to demonstrate the behavior of algorithm \ref{Algo:multiples-shapes}. 
For the numerical experiments, an example with two shapes is used, i.e., $N=2$, and the algorithm runs for 400 iterations.
The Neumann boundary condition in \eqref{Obj}--\eqref{eq:jumpconditionsd}  is set to $g=1000$ and the perimeter regularization is set to $\nu_1 = \nu_2 = 2\cdot 10^{-5}$.

In order to generate the target data $\bar{y}$ in the tracking-type objective functional \eqref{Obj}, a target shape vector $u^*=(u^*_1, u^*_2)$ is chosen, which is displayed in dotted lines in figure~\ref{fig:deterministic-shapes}.
The target shapes, i.e., an ellipse and a (non-convex) curved tube, are chosen so the configuration is non-symmetric, making their identification more difficult.
The permeability coefficients are assumed to be piecewise constant on each subdomain with the choices $\kappa_{0} = 1000$ for the outer domain $D_0^*$, 
$\kappa_1 = 7.5$ corresponding to the ellipse $D_1^*$, 
and $\kappa_2 = 5$ corresponding to the curved tube $D_2^*$. 
The data $\bar{y}$ is computed by solving the state equation \eqref{eq:PDE1d}--\eqref{eq:jumpconditionsd} on the target configuration $D^* = (\sqcup_{i=0}^2 D_i^*) \sqcup (\sqcup_{i=0}^2 u_i^*)$; see figure~\ref{fig:deterministic-y-bar}. 

Let $D^k = (\sqcup_{i=0}^2 D_i^k) \sqcup (\sqcup_{i=0}^2 u_i^k)$ be the configuration of the subdomains at iteration $k$.
The subdomains $D_i^k$ correspond to the different colors in figure~\ref{fig:deterministic-shapes}.
As for the computation for the target distribution, the coefficients are assumed to be piecewise constant on each subdomain with the choices $\kappa_{0} = 1000$ for the outer domain $D_0^k$, 
$\kappa_1 = 7.5$ corresponding to $D_1^k$, 
and $\kappa_2 = 5$ corresponding to  $D_2^k$. 
For the Armijo rule, the values $\hat{\alpha}=0.0175$, 
$\rho=0.9$, and $\sigma=10^{-4}$ are used. Since the algorithm is designed to deform the mesh, the initial step-size $\hat{\alpha}$ is scaled to be proportional to the maximal diameter of the elements, which is used as a heuristic solution to avoid mesh destruction.  
 Figure~\ref{fig:deterministic-shapes} shows the progression of the subdomains.  Within 400 iterations, one sees that the configuration $D^k$ obtained by the method comes quite close to the target. 
Figure~\ref{fig:deterministic-vector-fields} gives a visualization of the vector fields $V^k$ induced by solving the deformation equation \eqref{eq:deformation_equation}.  In figure~\ref{fig:deterministic-plots} one sees the decay of the objective function values and the $H^1$--norm of the deformation vector as a function of iteration number. The Armijo line search procedure ensures that $j(u^{k+1}) \leq j(u^k)$ for all $k$. The $H^1$--norm of the descent directions serves as a stationary measure, and the plots show decreasing as a function of the iterations.


\begin{figure}
  \begin{subfigure}[b]{0.5\linewidth}
  \centering
    \includegraphics[width=\textwidth]{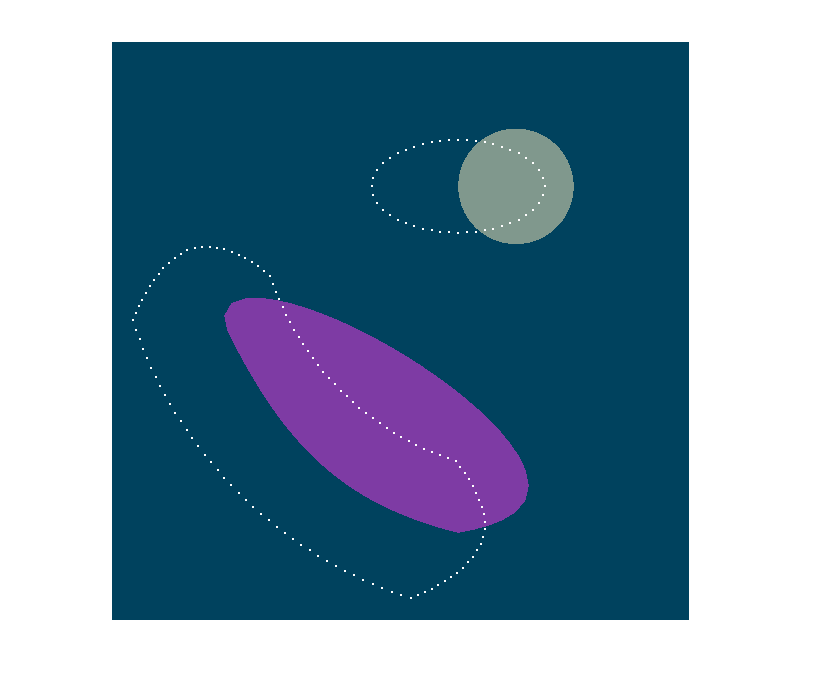}
    \caption{Initial configuration $D^0$}  
  \end{subfigure}%
  \begin{subfigure}[b]{0.5\linewidth}
    \centering
    \includegraphics[width=\textwidth]{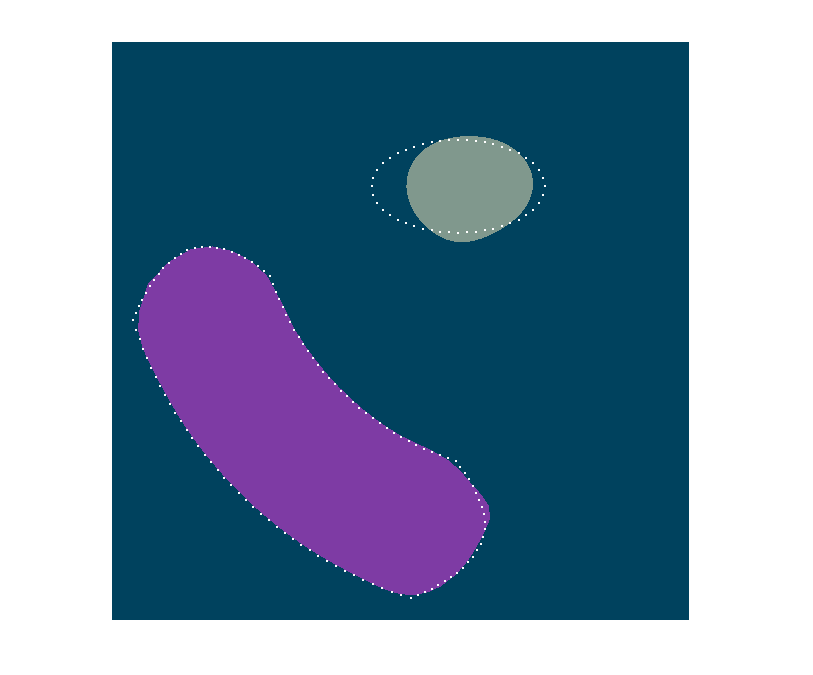}   
    \caption{$D^{50}$}
  \end{subfigure}\\%
  \begin{subfigure}[b]{0.5\linewidth}
    \centering
    \includegraphics[width=\textwidth]{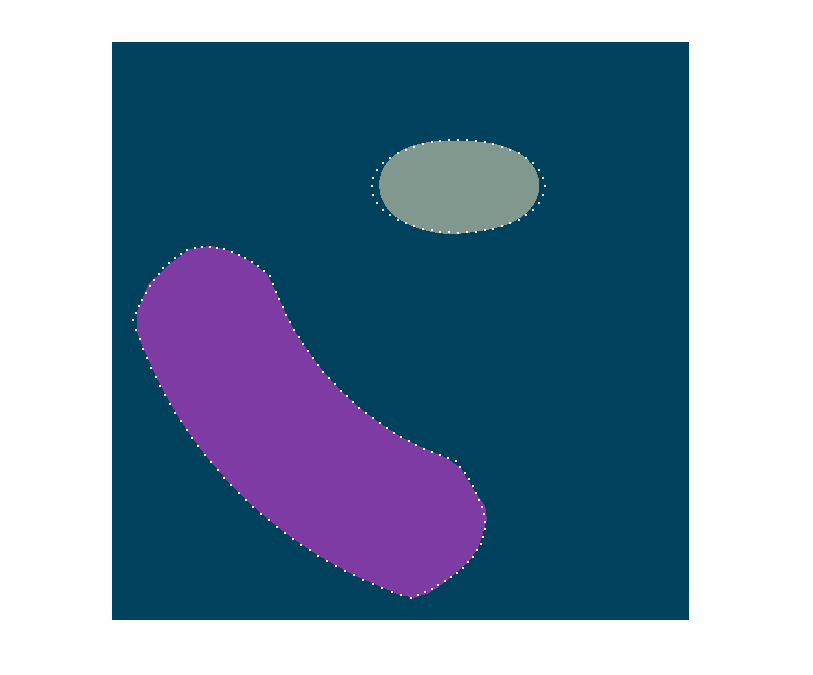}   		
   \caption{$D^{200}$}
  \end{subfigure}%
    \begin{subfigure}[b]{0.5\linewidth}
    \centering
    \includegraphics[width=\textwidth]{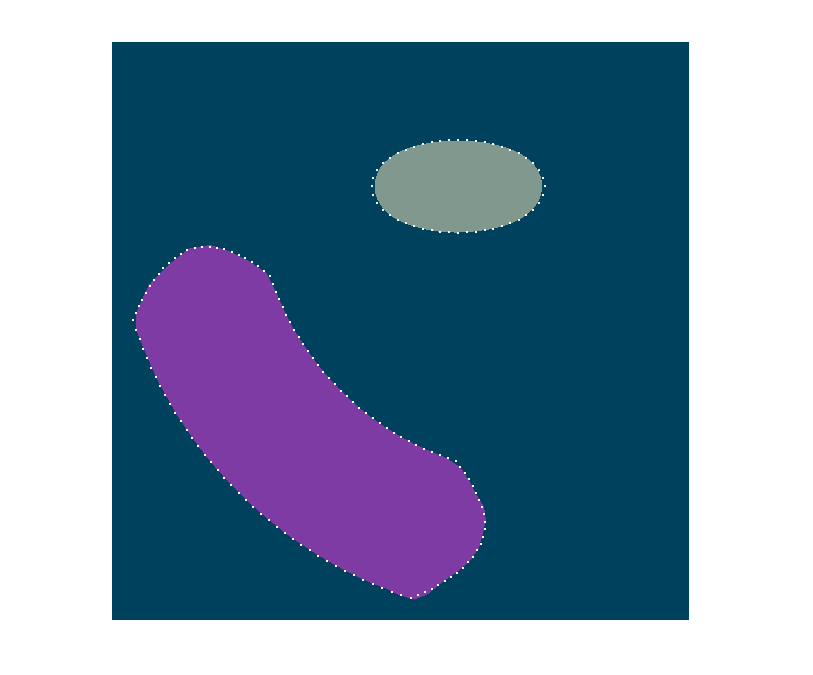}   		
   \caption{$D^{400}$}
  \end{subfigure}%
  \caption{The target shapes are displayed by the dotted lines. The outer domain $D_0^k$ is displayed in teal, the domain $D_1^k$ is displayed in light green, and the subdomain $D_2^k$ is shown in purple. The figures show the progression of the initial configuration $D^0$ to the final subdomain configuration $D^{400}$.}
  \label{fig:deterministic-shapes}
\end{figure}

  \begin{figure}
  \centering
    \includegraphics[width=0.5\textwidth]{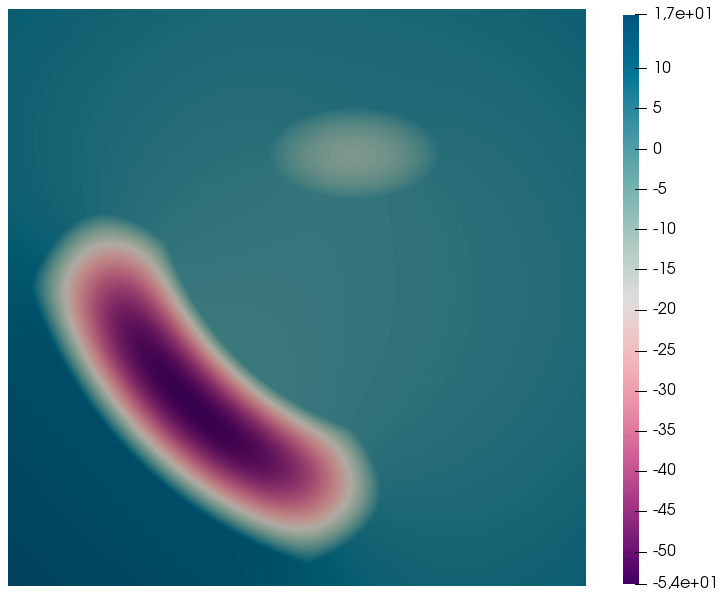}
\caption{Values of the target data $\bar{y}$.}
  \label{fig:deterministic-y-bar}
\end{figure}

 \begin{figure}
  \begin{subfigure}[b]{0.5\linewidth}
  \centering
    \includegraphics[width=\textwidth]{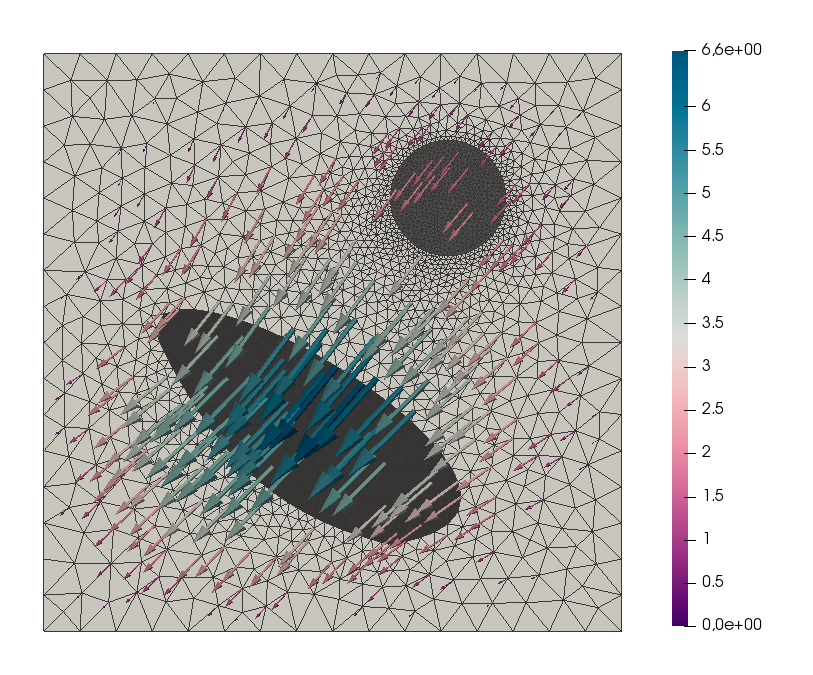}
    \caption{Vector field $V^0$}  
  \end{subfigure}%
  \begin{subfigure}[b]{0.5\linewidth}
    \centering
    \includegraphics[width=\textwidth]{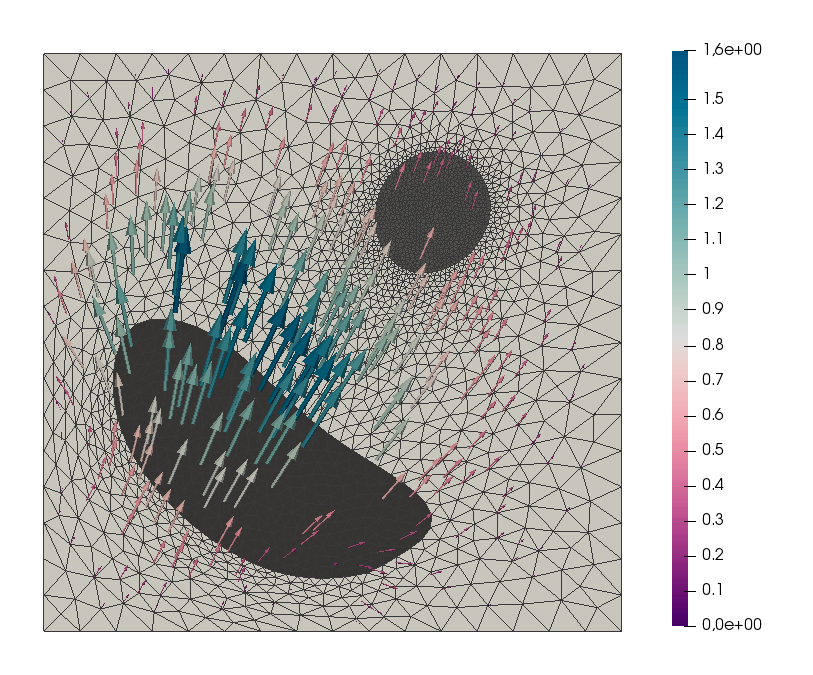}   
    \caption{Vector field $V^{3}$}
  \end{subfigure}
  \caption{Vector fields $V^k$ are displayed that result from solving the deformation equation	\eqref{eq:deformation_equation} at iteration $k$. }
  \label{fig:deterministic-vector-fields}
\end{figure}

 \begin{figure}
  \begin{subfigure}[b]{0.45\linewidth}
  \centering
    \includegraphics[width=\textwidth]{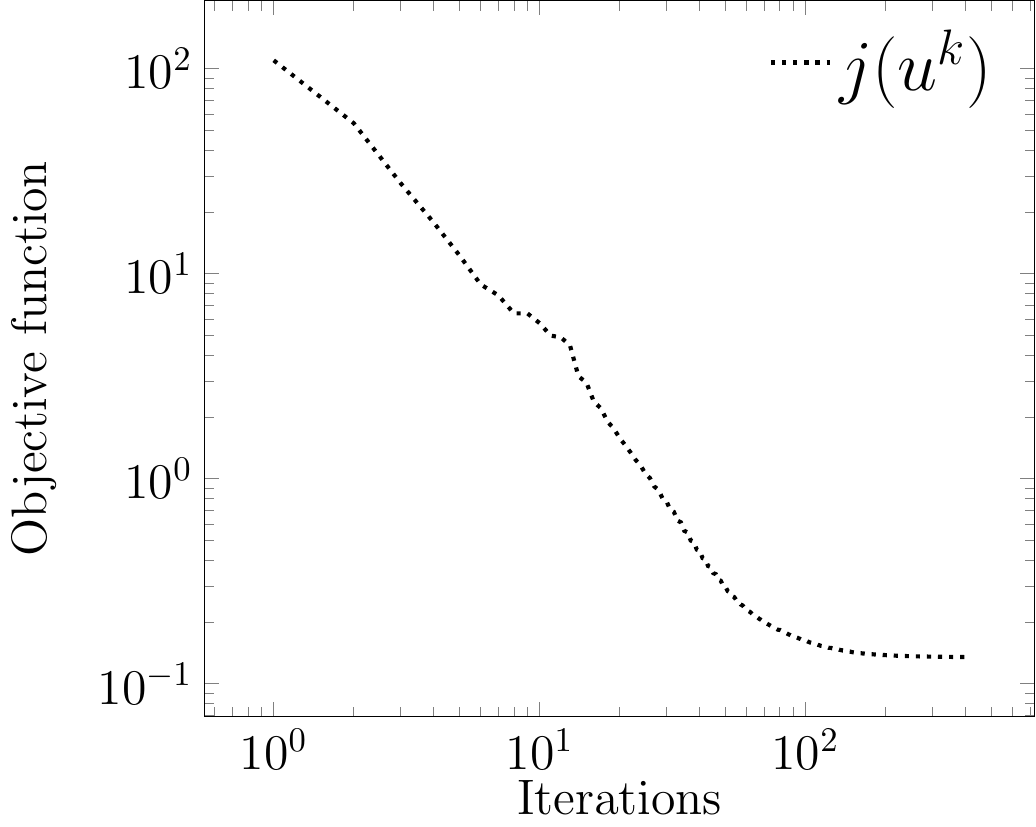} 
    \caption{Objective function decay}  
  \end{subfigure}%
  \begin{subfigure}[b]{0.45\linewidth}
    \centering
    \includegraphics[width=\textwidth]{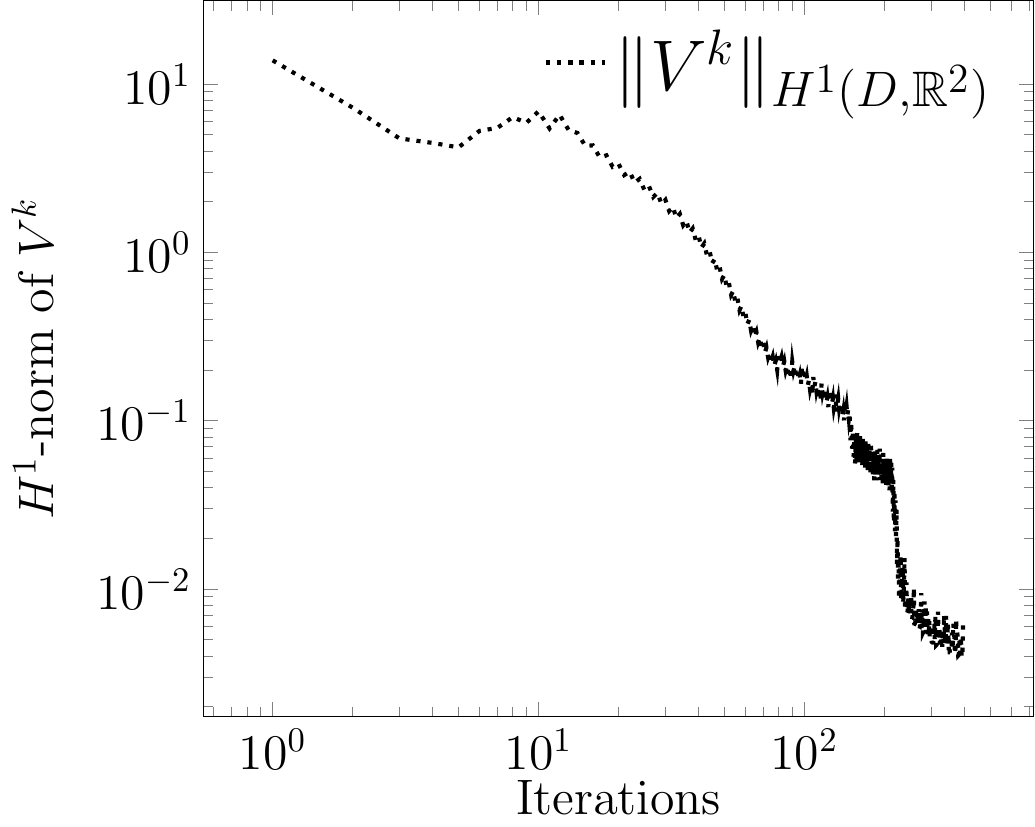}   
    \caption{Deformation vector field}
  \end{subfigure}
  \caption{Objective function and norm of the shape gradient as a function of iteration number (log/log scale).}
  \label{fig:deterministic-plots}
\end{figure}
\FloatBarrier

\subsubsection{Stochastic case: behavior of algorithm \ref{Algo:stochastic_descent}}
\label{subsection:Num_stoch}
Similar experiments to the one in subsection \ref{subsection:Num_det} are now shown. These experiments use the stochastic model formulated in subsection~\ref{Subsec:ModelProblem-Stoch} to demonstrate the performance of algorithm~\ref{Algo:stochastic_descent}. 
An example with two shapes is used again, i.e., $N=2$, and the same target shape vector $u^*$ as in subsection~\ref{subsection:Num_det} is considered. The same values for $g$ and $\nu_1=\nu_2$ are used. 

To generate samples according to the discussion at the end of subsection~\ref{Subsec:ModelProblem-Stoch}, for simplicity $\tilde{D}=D$ is used, allowing for the explicit representations of the eigenfunctions and eigenvalues in \eqref{eq:KL-expansion}. From  \cite[Example 9.37]{Lord2014}), the eigenfunctions and eigenvalues on $D$ are given by the formula
$$\tilde{\phi}_{j}^k(x):= 2\cos(j \pi x_2)\cos(k \pi x_1), \quad \tilde{\gamma}_{j}^k:=\frac{1}{4} \exp(-\pi(j^2+k^2)l^2), \quad j,k \geq 1,$$
where terms are then reordered so that the eigenvalues appear in descending order (i.e., $\phi_1 = \tilde{\phi}_{1}^1$ and $\lambda_1 = \tilde{\lambda}_{1}^1$). The correlation length $l=0.5$ and the number of summands $M=20$ is fixed. For the simplicity of presentation, each subdomain has the same eigenfunctions and eigenvalues, and only the means and random vectors are modified. More precisely, \eqref{eq:KL-expansion} has the representation

\begin{equation}
\label{eq:KL-expansion-numerics}
 \tilde{\kappa}_{i}(x,\omega) =\bar{\kappa}_{i}(x) + \sum_{k=1}^{20} \sqrt{\gamma_{k}} \phi_{k}(x) \xi_{i,k}(\omega),
\end{equation}
for every $i=0, 1, 2$. Using the same labeling convention as in the deterministic study, the values $\bar{\kappa}_0 = 1000$, $\bar{\kappa}_1 = 7.5$, and $\bar{\kappa}_2 = 5$ are used for the mean in the outer, ellipse, and tube domains, respectively. Notice that these are compatible with the choices used in the deterministic experiment. Deviations from this mean are simulated using the centered distributions $\xi_{0,k} \sim U[-50,50]$, $\xi_{1,k} \sim U[-2.5, 2.5]$, $\xi_{2,k} \sim U[-1,1]$, with $U[a,b]$ standing for the uniform distribution on the interval $[a,b] \subset\R.$ Figure~\ref{fig:random-field-realizations} shows two examples of the random fields. Since these are shown for different iterations, one also sees how a single sample in the definition of $\kappa$ is adapted to the movement of the shapes.

 \begin{figure}
 \begin{center}
  \begin{subfigure}[b]{0.5\linewidth}
  \centering
    \includegraphics[width=\textwidth]{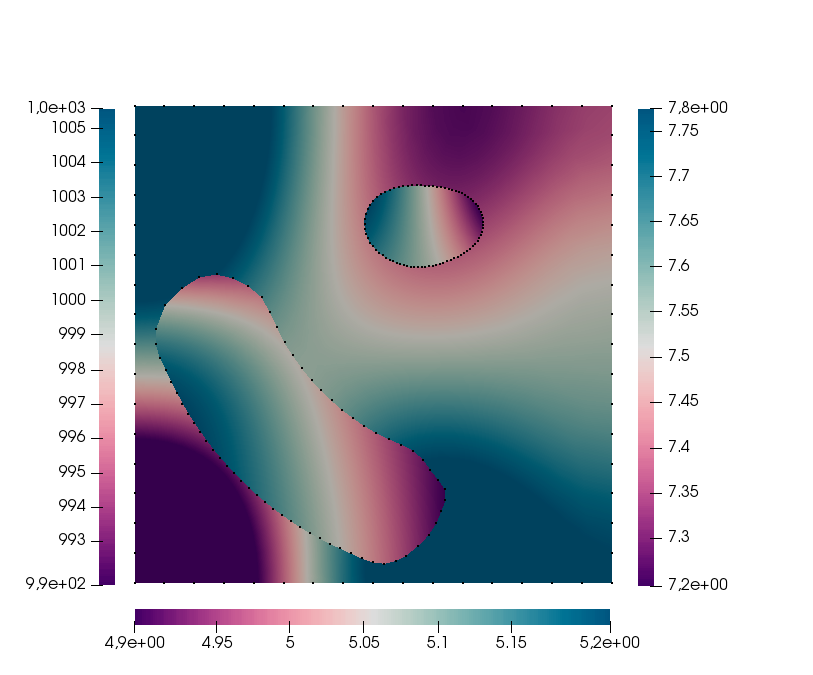} 
    \caption{Example realization of the $\kappa$ at iteration $k=100$}  
  \end{subfigure}%
  \begin{subfigure}[b]{0.5\linewidth}
    \centering
    \includegraphics[width=\textwidth]{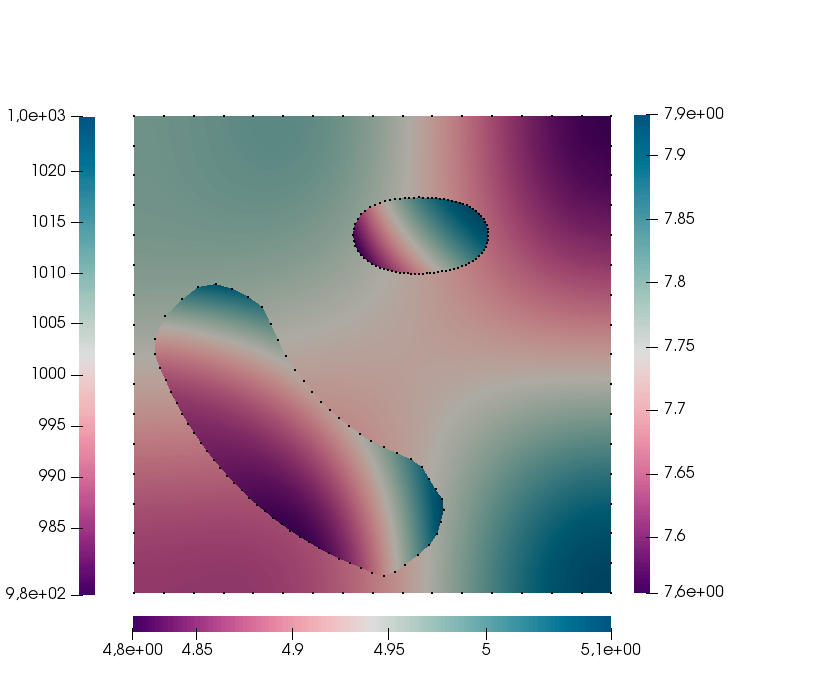}   
    \caption{Example realization of the $\kappa$ at iteration $k=300$}
  \end{subfigure}
  \end{center}
  \caption{Two examples of random field $\kappa$, with the left, right, and bottom scales corresponding to the outer domain $D_0^k$, the ellipse $D_1^k$, and the tube $D_2^k$, respectively.}
  \label{fig:random-field-realizations}
\end{figure}

The target $\bar{y}$ in the objective functional \eqref{Objective_TrackingType}  is computed by solving the \textit{deterministic} state equation \eqref{eq:PDE1d}--\eqref{eq:jumpconditionsd} on the target configuration with the mean values $\bar{\kappa}_0$, $\bar{\kappa}_1$, and $\bar{\kappa}_2$ on the target configuration $D^* = (\sqcup_{i=0}^2 D_i^*) \sqcup (\sqcup_{i=0}^2 u_i^*)$. The target is the same as in subsection~\ref{subsection:Num_det}; see figure~\ref{fig:deterministic-y-bar}.

Regarding the choice of the step-size according to \eqref{eq:Robbins-Monro-step-sizes}, experiments showed that a rule of the form $t^k = c/k$ performed poorly in practice. 
This is mostly due to the fact that the choice $c$ is limited by the fineness of the mesh; if this parameter is chosen to be too large, then the mesh deforms too drastically in the first few iterations, leading to broken meshes. However, if $c$ is chosen to be too small, the progress---although guaranteed to produce stationary points in the limit---is much too slow. To mitigate this effect, a warm start of 250 iterations using the constant step size $t^k = c=0.015$ 
is used until the shapes appear to be in the neighborhood of the optimum. Then the rule $t^k = c/(k-250)$ is used for $k=251, \dots, 400$. This produces excellent results as shown in figure~\ref{fig:random-shapes}. Even in the presence of noise, the progression of the subdomains resembles that shown in figure~\ref{fig:deterministic-shapes}. 
\begin{figure}
  \begin{subfigure}[b]{0.5\linewidth}
  \centering
    \includegraphics[width=\textwidth]{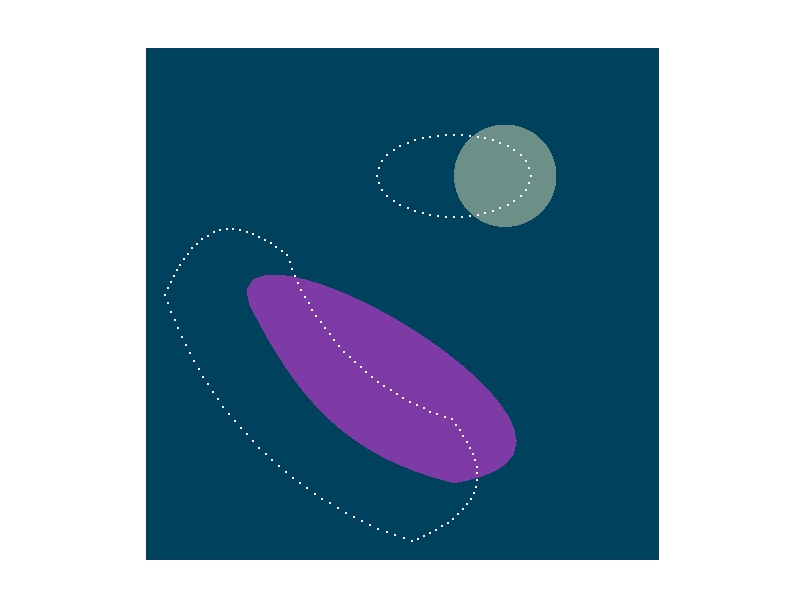} 
    \caption{Initial configuration $D^0$}  
    \label{fig:ellipse-to-circle_target-stochastic}
  \end{subfigure}%
  \begin{subfigure}[b]{0.5\linewidth}
    \centering
    \includegraphics[width=\textwidth]{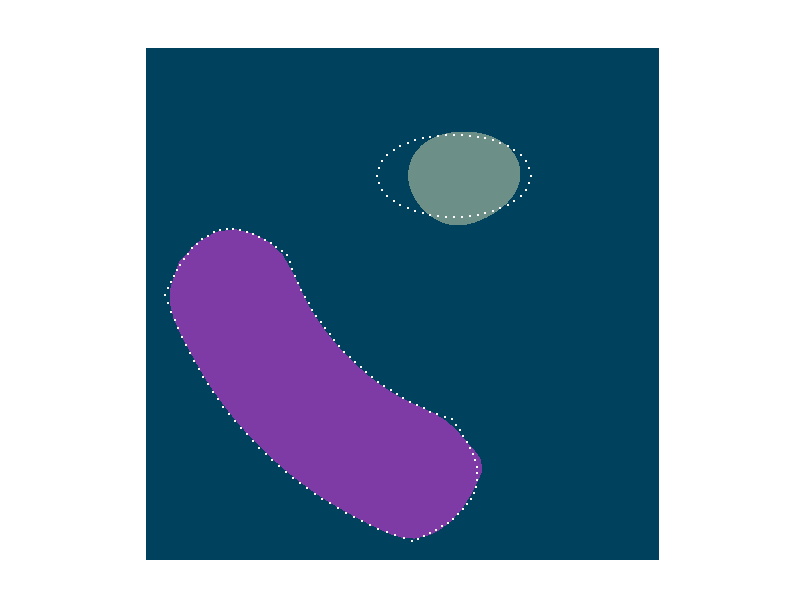}
    \caption{$D^{50}$}
  \end{subfigure}\\%
  \begin{subfigure}[b]{0.5\linewidth}
    \centering
    \includegraphics[width=\textwidth]{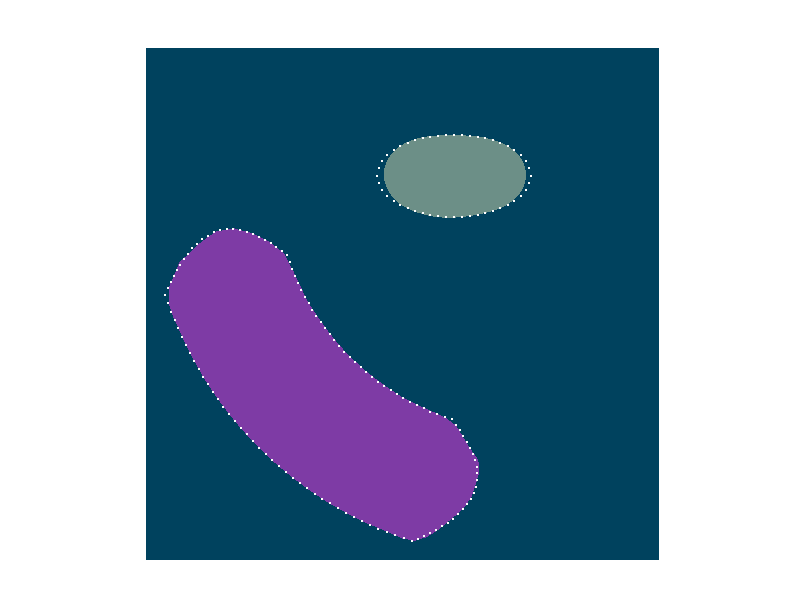}   
   \caption{$D^{200}$}
  \end{subfigure}%
    \begin{subfigure}[b]{0.5\linewidth}
    \centering
    \includegraphics[width=\textwidth]{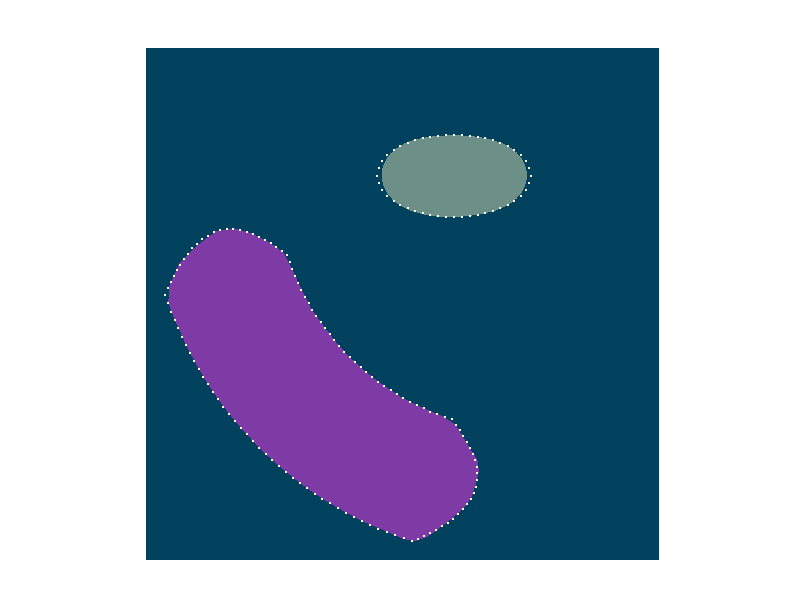} 
   \caption{$D^{400}$}
  \end{subfigure}%
  \caption{The target shapes are displayed by the dotted lines. The figures show the progression from the initial configuration of domains $D^0$ to the final configuration of domains $D^{400}$.}
  \label{fig:random-shapes}
\end{figure}

Figure~\ref{fig:stochastic-plots} provides a stochastic counterpart to figure~\ref{fig:deterministic-plots}, in which one sees the progression of the parametrized functional $J(u^k,\omega^k)$ as well as the vector field $V^k = V^k(\omega^k)$, where $\omega^k$ represents the abstract realization from the probability space in iteration $k$, which is manifested by the specific realizations of the random vectors $(\xi_{i,1}(\omega^k), \dots, \xi_{i,20}(\omega^k))$, $i=0,1,2$, used in the random fields. In contrast to the Armijo line search rule, the Robbins--Monro step-size rule does not guarantee descent in every iteration. Moreover, the information displayed in the plots can only provide estimates for the true objective $j(u^k) = \EE[J(u^k,\cdot)]$ and the average $\EE[\lVert V^k(\cdot)\rVert_{H^1(D,\R^2)}]$. Although small oscillations in the shapes were observed in the course of the algorithm, the oscillations from the plots come more from the stochastic error occuring due to $J(u^k,\omega^k) \approx \EE[J(u^k,\cdot)]$ and $\lVert V^k(\omega^k)\rVert_{H^1(D,\R^2)} \approx \EE[\lVert V^k(\omega)\rVert_{H^1(D,\R^2)}]$. The log/log scale misleadingly exaggerates these oscillations for higher iteration numbers and the Robbins--Monro step-size rule tended to dampen oscillations in the shapes for higher iterations. However, even with the oscillations, descent is seen \textit{on average} in both the parametrized objective and in the $H^1$--norm of the randomly generated deformation vector fields.

 \begin{figure}
  \begin{subfigure}[b]{0.45\linewidth}
  \centering
    \includegraphics[width=\textwidth]{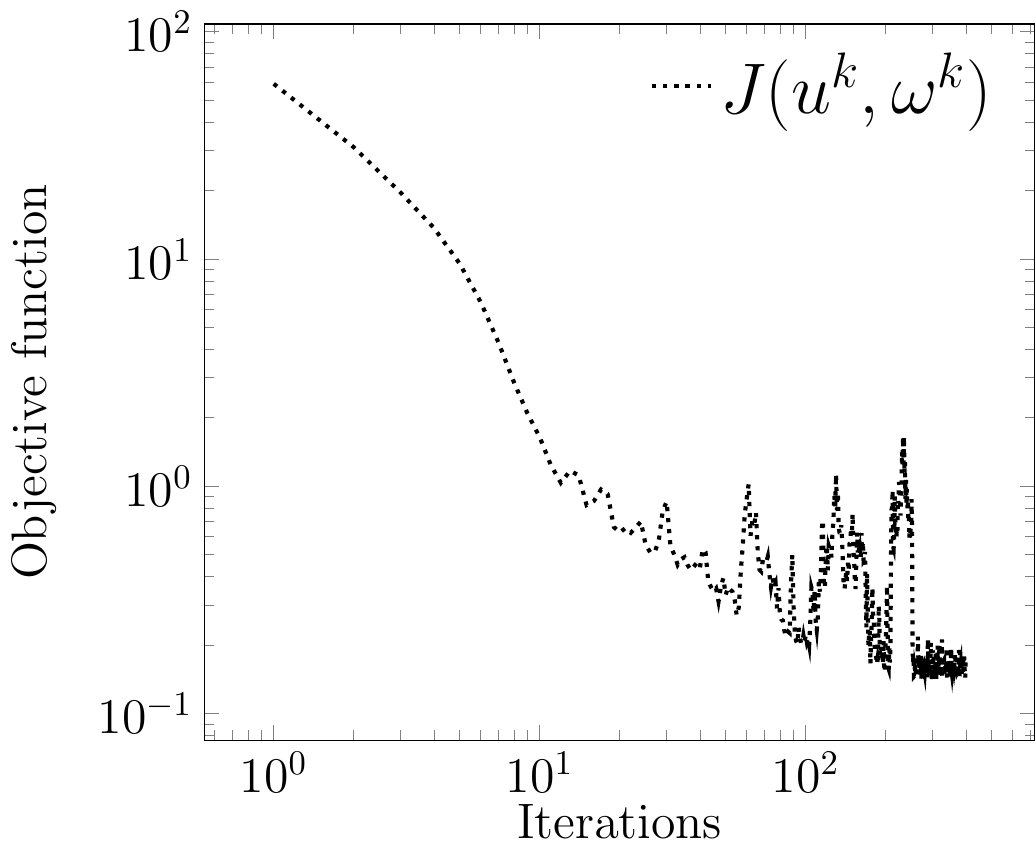} 
    \caption{Objective function decay}  
  \end{subfigure}%
  \begin{subfigure}[b]{0.45\linewidth}
    \centering
    \includegraphics[width=\textwidth]{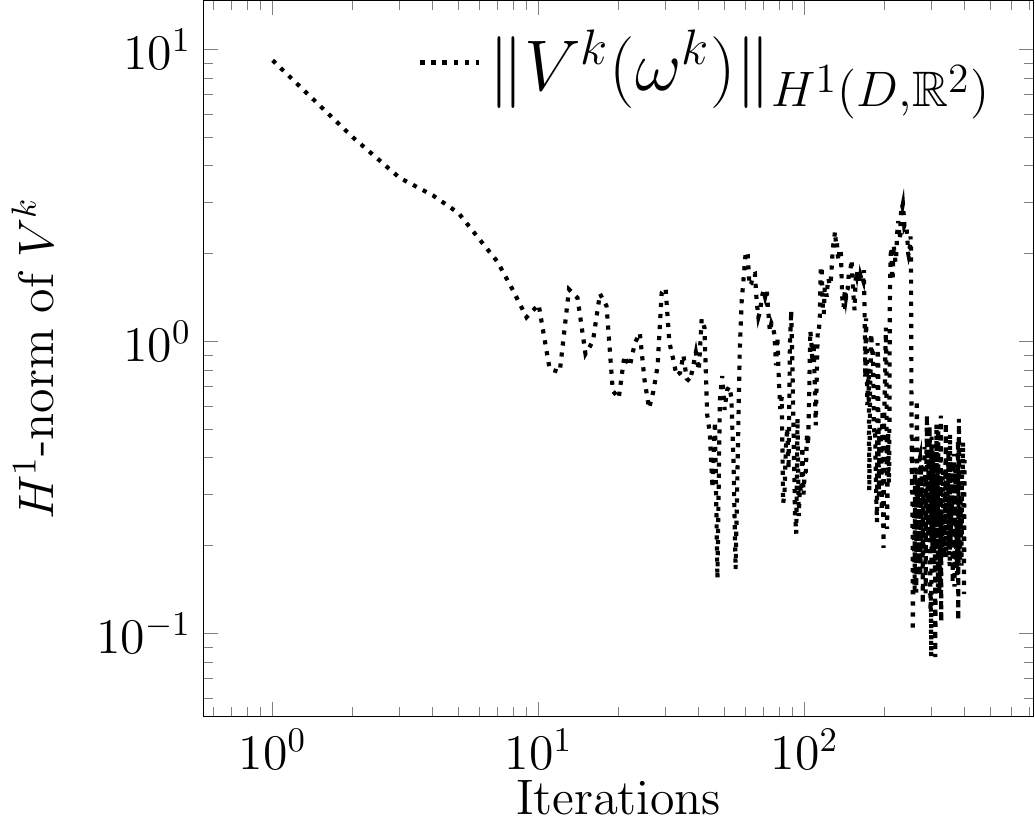}   
    \caption{Deformation vector field}
  \end{subfigure}
  \caption{Objective function and norm of the shape gradient as a function of iteration number (log/log scale).}
  \label{fig:stochastic-plots}
\end{figure}
\FloatBarrier

\subsubsection{Robustness: Deterministic vs.~stochastic model}
A final experiment justifies the use of the stochastic model if experimental parameters are uncertain. To demonstrate the concept, only a single shape is used, i.e., $N=1$. The perimeter regularization is fixed with $\nu = 5\cdot 10^{-2}$.  The expansion \eqref{eq:KL-expansion-numerics} is used for $i=0,1$ with the same eigenfunctions, eigenvalues, and choices of the correlation length $l$ and number of summands $M$. 
In each iteration $k$, on the outer domain $D_0^k$, the mean is given by $\bar{\kappa}_0 = 1000$ and distribution is chosen to be $\xi_{0,k} \sim U[-75,75]$. On the domain $D_1^k$, the mean and distribution are given by $\bar{\kappa}_1 = 7.5$ and $\xi_{1,k} \sim U[-4.5, 4.5]$.

For the generation of the target data $\bar{y}$ in the tracking-type objective functional, the target shape $u^\ast$ is chosen to be the boundary of an ellipse as illustrated by the dotted lines in figure~\ref{fig:single-shape-min-max}.
The target distribution $\bar{y}$ is computed on the target domain $D^\ast=D_0^\ast \sqcup D_1^\ast \sqcup u^\ast$  by solving the state equation \eqref{eq:PDE1d}--\eqref{eq:jumpconditionsd} using the constant values $\bar{\kappa}_0 = 1000$ over the outer domain $D^*_0$ and $\bar{\kappa}_1 = 7.5$ defined over the ellipse $D^*_{1}$. The target data can be seen in figure~\ref{fig:single-shape-y-bar}.
As in the previous experiments, algorithms are run for 400 iterations. The results of the simulation are shown in figure~\ref{fig:single-shape-min-max}, where the target shape $u^*$ is represented by dotted lines. The same initial configuration, shown in  figure~\ref{fig:single-shape-min-max_a}, is used for three separate runs of the algorithm.

In the first run, the stochastic model with the parameters described in the previous paragraph is used, and the stochastic gradient method (algorithm~\ref{Algo:stochastic_descent}) is used with the step-size rule $t^k = 0.026$ for $k=0, \dots, 200$, and $t^k = 0.026/(k-200)$ for $k=201, \dots, 400$. The configuration obtained at 400 iterations approximates the desired configuration nicely as shown in figure~\ref{fig:single-shape-min-max_b}. 

Incorrect choices for the parameters are used for the next two runs. In the disastrous case, where these parameters are incorrectly chosen at the upper or lower limits of the probability distributions, the deterministic  algorithm~\ref{Algo:multiples-shapes} does not correctly identify the desired shape $u^*$. Using the choices $\kappa_{0,\min} = 937.3$ and $\kappa_{1,\min} = 3.7$, which are chosen in such a way such that $\kappa_{i,\min} \leq \kappa_{i}(x,\omega)$ for all $(x,\omega)\in D \times \Omega$, $i=0,1$, produces the result figure~\ref{fig:single-shape-min-max_c}. Alternatively, with the choices $\kappa_{0,\max} = 1062.7$ and $\kappa_{1,\max} = 11.3$, analogously chosen so that  $\kappa_{i,\max} \geq \kappa_{i}(x,\omega)$ for all $(x,\omega)\in D\times \Omega$, $i=0,1$, results in the configuration shown in figure~\ref{fig:single-shape-min-max_d}. One clearly sees in both figure~\ref{fig:single-shape-min-max_c} and figure~\ref{fig:single-shape-min-max_d} that the correct shape is not identified, even for this very simple example.
 In summary, when parameters are subject to uncertainty, but a good model for the uncertainty is available, it is always better to use the stochastic model. The corresponding solution to the stochastic model is robust with respect to these uncertainties.

\begin{figure}
  \begin{subfigure}[b]{0.5\linewidth}
  \centering
    \includegraphics[width=\textwidth]{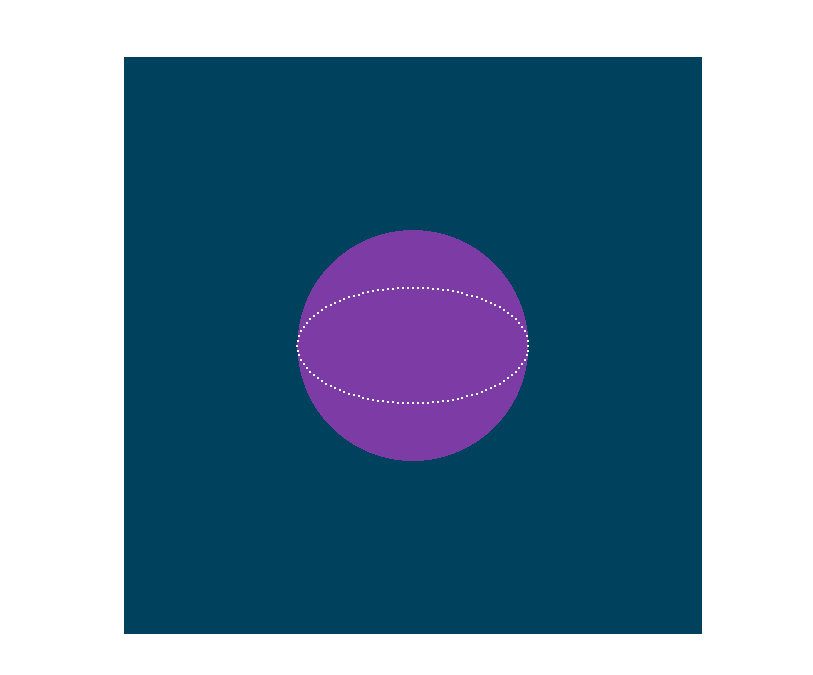} 
    \caption{Initial configuration $D^0$}  
    \label{fig:single-shape-min-max_a}
  \end{subfigure}%
  \begin{subfigure}[b]{0.5\linewidth}
    \centering
    \includegraphics[width=\textwidth]{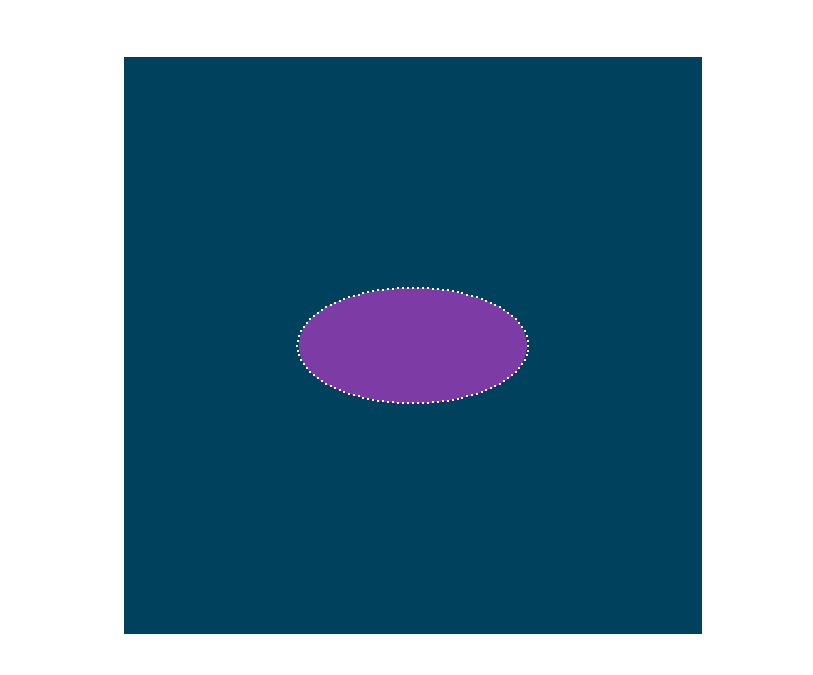}
    \caption{$D^{400}$ for stochastic model}
     \label{fig:single-shape-min-max_b}
  \end{subfigure}\\%
  \begin{subfigure}[b]{0.5\linewidth}
    \centering
    \includegraphics[width=\textwidth]{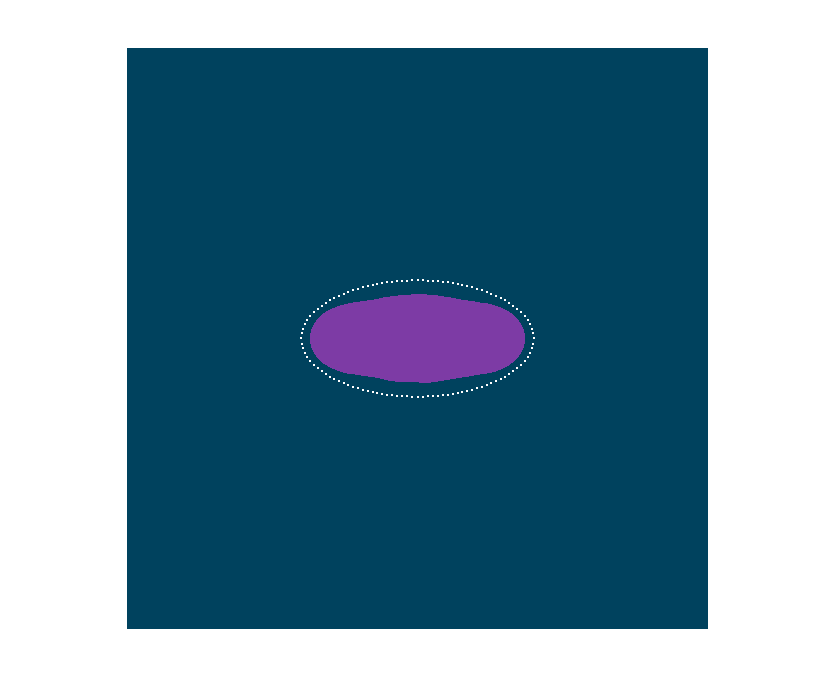}
   \caption{$D^{400}$ using the constants $\kappa_{i,\min}$, $i=0,1$}
   \label{fig:single-shape-min-max_c}
  \end{subfigure}%
    \begin{subfigure}[b]{0.5\linewidth}
    \centering
    \includegraphics[width=\textwidth]{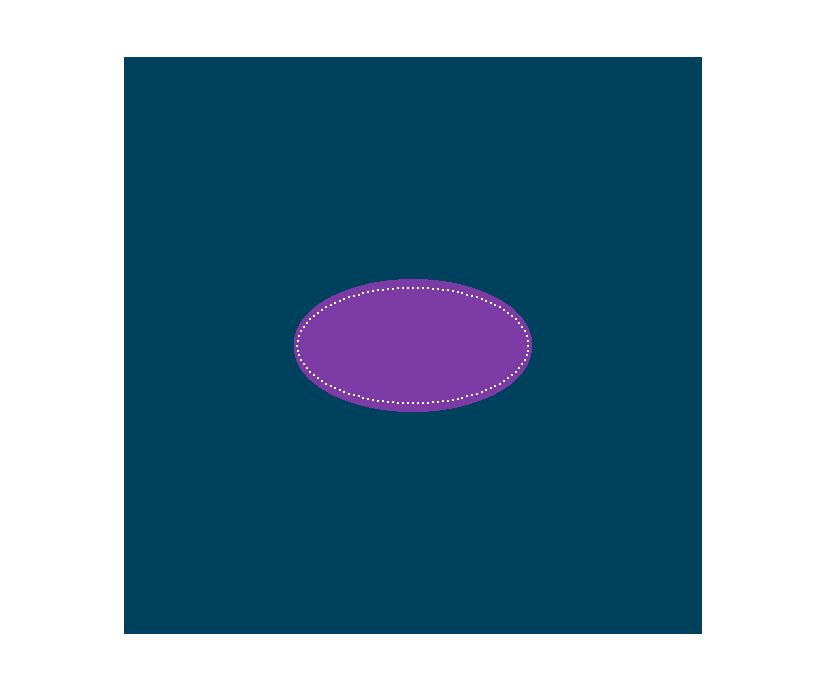}
   \caption{$D^{400}$ using the constants $\kappa_{i,\max}$, $i=0,1$}
   \label{fig:single-shape-min-max_d}
  \end{subfigure}%
  \caption{The figures show the initial configuration in (a) and the configuration computed using the stochastic model and stochastic gradient approach in (b). Using the lower bound choices produces an incorrect identification in (c); with the upper bound choices, the target shape is likewise incorrectly identified.}
  \label{fig:single-shape-min-max}
\end{figure}

  \begin{figure}
  \centering
    \includegraphics[width=0.55\textwidth]{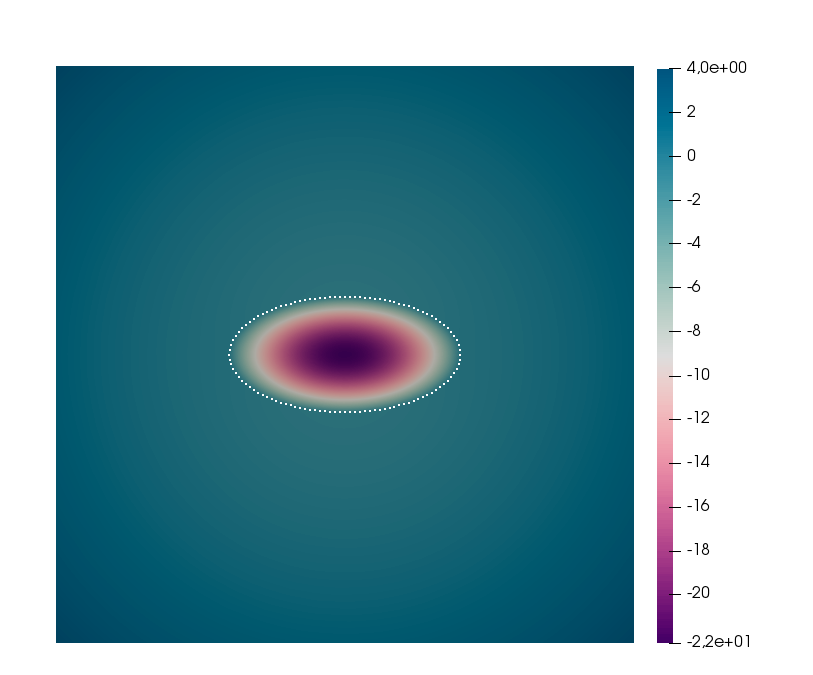}
  \caption{Values of the target data $\bar{y}$.}
  \label{fig:single-shape-y-bar}
\end{figure}

\section{Conclusion}
\label{sec:conclusion}
This chapter gives an overview how the theory of (PDE-constrained) shape optimization can be connected with the differential-geometric structure of shape space and how this theory can be adapted to handle harder problems containing multiple shapes and uncertainties. The framework presented is focused on shape spaces as Riemannian manifolds; in particular, on the space of smooth shapes and the Steklov-Poincar\'{e} metric. The Steklov-Poincar\'{e} metric allows for the usage of the shape derivative in its volume expression in optimization methods.
A novel framework developed in this chapter is a product shape shape, which allows for shape optimization over a vector of shapes. As part of this framework, new concepts including the partial and multi-shape derivatives are presented. The steepest descent method with Armijo backtracking on product shape spaces is formulated to solve a shape optimization problem over a vector of shapes.

The second area of focus in this chapter is concerned with shape optimization problems subject to uncertainty. The problem is posed as a minimization of the expectation of a random objective functional depending on uncertain parameters. Using the product shape space framework, it is no trouble to consider stochastic shape optimization problems depending on shape vectors. Corresponding definitions for the stochastic partial and multi-shape gradient are presented. These are needed  to present the stochastic gradient method on product shape spaces. It is discussed how the stochastic shape derivative in its volume expression can be used algorithmically.

The final part of the chapter is dedicated to carefully designed numerical simulations showing the performance of the algorithms. Compatible deterministic and stochastic problems are presented. A novel technique for producing stochastic samples of the Karhunen--Lo\`eve type is presented.
The stochastic model is shown in experiments to be robust if a model for the uncertainties is present.

The new framework provides a rigorous justification for computing descent vectors ``all-at-once'' on a hold-all domain. Moreover, new concepts like the partial shape derivatives and multi-shape derivatives provide tools that could be used in other applications. There are some open questions; for one, it is not clear how descent directions in general prevent shapes from intersecting as part of the optimization procedure. Mesh deformation methods like the kind used here would result in broken meshes. While the algorithms presented do not rely on remeshing, it is notable that meshes lose their integrity if initial shapes are chosen too far away from the target. These challenges will be addressed in other works.




\bibliographystyle{plain}   
\bibliography{references}

\end{document}